\documentclass[square]{imsart}

\RequirePackage[OT1]{fontenc}
\RequirePackage{natbib}
\RequirePackage[colorlinks,citecolor=blue,urlcolor=blue]{hyperref}

\makeatletter \@addtoreset{equation}{section} \makeatother
\usepackage{graphicx}
\usepackage{amsmath}
\usepackage{amssymb}
\usepackage{amsfonts}
\usepackage{amsthm}
\usepackage{thmtools}
\usepackage{caption}
\usepackage{subcaption}

\usepackage{cleveref}
\usepackage[noend]{algpseudocode}
\usepackage{algorithm}
\usepackage[leftcaption]{sidecap}

\newcommand{\EE}{{\mathbb E}}

\newcommand{\RR}{{\mathbb  R}}

\DeclareMathOperator{\cov}{Cov}

\DeclareMathOperator{\leb}{Leb}



\newtheorem{thm}{Theorem}[section]
\newtheorem{prop}{Proposition}[section]

\newtheorem{rem}{Remark}[section]

\newtheorem{cor}{Corollary}[section]



\setlength{\oddsidemargin}{0.0in}
\setlength{\evensidemargin}{0.0in}
\setlength{\textwidth}{6.5in}
\setlength{\topmargin}{0.0in}
\advance \topmargin by -\headheight
\advance \topmargin by -\headsep
\advance \topmargin .2in
\setlength{\textheight}{8.0in}
\sloppy \hyphenpenalty=10000
\begin{document}

\begin{frontmatter}
\title{Efron's monotonicity property for measures on $\RR^2$}  
\runtitle{Efron's theorem} 

\begin{aug}
\author{Adrien Saumard \thanksref{t1}\ead[label=e1]{asaumard@gmail.com}}
\address{CREST, Ensai, Universit{\'e} Bretagne Loire
\\ \printead{e1}}
\thankstext{t1}{Supported by NI-AID grant 2R01 AI29168-04, and by a PIMS postdoctoral fellowship}
\author{Jon A. Wellner\thanksref{t2}\ead[label=e2]{jaw@stat.washington.edu}}
\thankstext{t2}{Supported in part by NSF Grant DMS-1566514, NI-AID grant 2R01 AI291968-04} 

\address{Department of Statistics, University
of Washington, Seattle, WA  98195-4322,\\ \printead{e2}}

\runauthor{Saumard \& Wellner}
\end{aug}





\begin{abstract}
First we prove some kernel representations for the covariance of two functions taken on the same random variable
and deduce kernel representations for some functionals of a continuous one-dimensional measure.
Then we apply these formulas to extend Efron's monotonicity property, given in \cite{MR0171335}
and valid for independent log-concave measures, to the case of general measures on $\mathbb{R}^2$. 
The new formulas are also used to derive some further quantitative estimates in Efron's
monotonicity property. 
\end{abstract}

\begin{keyword}[class=AMS]
\kwd[Primary ]{60E15}
\kwd{60F10}
\end{keyword}

\begin{keyword}
\kwd{covariance formulas}
\kwd{log-concave}
\kwd{monotoncity}
\kwd{preservation}
\end{keyword}

\end{frontmatter}

\tableofcontents

\bigskip

\newpage

\section{Introduction : a monotonicity property}
\label{section:intro}

\cite{MR0171335} proved the following proposition:  

\begin{prop}
\label{prop_equi_dim_2}Let $\left( X,Y\right) $ be a pair of real valued
random variables. Then the following two statements are equivalent:

\begin{description}
\item[(i)] For any $\Psi:\mathbb{R}^{2}\rightarrow \mathbb{R}$, a function
which is nondecreasing in each argument, the conditional expectation
\begin{equation}
I(s)=\mathbb{E}\left[\Psi \left( X,Y\right) \left\vert X+Y=s\right. \right]
\label{def_I}
\end{equation}
is nondecreasing in $s$.

\item[(ii)] For any $\left( x,y\right) \in \mathbb{R}^{2}$, the conditional survival functions%
\begin{equation}
S_X(x;s)=\mathbb{P}\left[ X>x\left\vert X+Y=s\right. \right] \ \ \text{ and }\ \ 
S_Y(y;s)=\mathbb{P}\left[ Y>y\left\vert X+Y=s\right. \right]
\label{def_S}
\end{equation}%
are nondecreasing in $s$.
\end{description}
\end{prop}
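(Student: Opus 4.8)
For the implication (i)$\,\Rightarrow\,$(ii) I would simply apply (i) to indicator functions. The map $\Psi(x,y)=\mathbf{1}\{x>x_0\}$ is nondecreasing in $x$ and constant in $y$, hence nondecreasing in each argument, so (i) forces $I(s)=\mathbb{P}[X>x_0\mid X+Y=s]=S_X(x_0;s)$ to be nondecreasing in $s$; as $x_0$ is arbitrary this gives the statement for $S_X$, and $\Psi(x,y)=\mathbf{1}\{y>y_0\}$ does the same for $S_Y$. All the work is in the converse.

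For (ii)$\,\Rightarrow\,$(i) the plan is to realize the conditional laws of $(X,Y)$ given $\{X+Y=s\}$, as $s$ varies, on one probability space through a single monotone (quantile) coupling. Let $Q_s$ be a regular version of the conditional law of $X$ given $X+Y=s$; since $Y=s-X$ on $\{X+Y=s\}$, the conditional law of $(X,Y)$ given $\{X+Y=s\}$ is the image of $Q_s$ under $x\mapsto(x,s-x)$, whence $I(s)=\int_{\mathbb{R}}\Psi(x,s-x)\,Q_s(dx)$ for $\mathrm{Law}(X+Y)$-a.e.\ $s$. Denote by $F_s$ the distribution function of $Q_s$ and by $q_s(u)=\inf\{x:F_s(x)\ge u\}$ its quantile function. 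The key step I would carry out first is to show that (ii) is equivalent to the two-sided quantile bound
\begin{equation}
q_{s_1}(u)\ \le\ q_{s_2}(u)\ \le\ q_{s_1}(u)+(s_2-s_1),\qquad 0<u<1,\quad s_1\le s_2.
\label{eq:quantile-sandwich}
\end{equation}
The monotonicity of $S_X(x_0;\cdot)$ for every $x_0$ says precisely that $Q_{s_1}$ is stochastically dominated by $Q_{s_2}$, i.e.\ $F_{s_1}\ge F_{s_2}$ pointwise, which is the left-hand inequality in \eqref{eq:quantile-sandwich}. For the right-hand one, write $S_Y(y_0;s)=Q_s\big((-\infty,s-y_0)\big)$, so the monotonicity of $S_Y(y_0;\cdot)$, after the substitution $a=s_2-y_0$, reads $Q_{s_1}\big((-\infty,a-(s_2-s_1))\big)\le Q_{s_2}\big((-\infty,a)\big)$ for all $a\in\mathbb{R}$; replacing $a$ by $a+1/n$ and letting $n\to\infty$ turns this into $F_{s_1}\big(\,\cdot-(s_2-s_1)\big)\le F_{s_2}(\cdot)$, i.e.\ $Q_{s_2}$ is stochastically dominated by $Q_{s_1}+(s_2-s_1)$, which is the right-hand inequality in \eqref{eq:quantile-sandwich}.

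Given \eqref{eq:quantile-sandwich}, the coupling finishes things in two lines. Take $V\sim\mathrm{Unif}(0,1)$ and set $X_s:=q_s(V)$ and $Y_s:=s-X_s$; then $(X_s,Y_s)$ has the conditional law of $(X,Y)$ given $\{X+Y=s\}$, so $I(s)=\mathbb{E}\,\Psi(X_s,Y_s)$. For $s_1\le s_2$ the left inequality in \eqref{eq:quantile-sandwich} gives $X_{s_1}\le X_{s_2}$ almost surely, while
\[
Y_{s_2}-Y_{s_1}=(s_2-s_1)-\big(q_{s_2}(V)-q_{s_1}(V)\big)\ \ge\ 0
\]
almost surely by the right inequality; hence $(X_{s_1},Y_{s_1})\le(X_{s_2},Y_{s_2})$ coordinatewise, almost surely. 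Since $\Psi$ is nondecreasing in each argument, $\Psi(X_{s_1},Y_{s_1})\le\Psi(X_{s_2},Y_{s_2})$ almost surely, and taking expectations yields $I(s_1)\le I(s_2)$, which is (i).

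The genuinely delicate part is the equivalence \eqref{eq:quantile-sandwich}: one has to keep track of the fact that $S_X$ involves the open half-lines $(x_0,\infty)$ whereas $S_Y$ involves the open half-lines $(-\infty,s-y_0)$, so that the clean stochastic-ordering statements — in particular the shift by $s_2-s_1$ — are obtained only after passing between open and closed half-lines via monotone limits. I would also need the standard facts that a regular conditional law $Q_s$ exists and can be chosen jointly measurable in $s$ (so that $q_s(u)$ is too), that the explicit version $s\mapsto\mathbb{E}\,\Psi\big(q_s(V),s-q_s(V)\big)$ is the relevant nondecreasing representative of $I$, and that a general $\Psi$ with $\mathbb{E}|\Psi(X,Y)|<\infty$ reduces to the bounded case via the truncations $\Psi_n=(\Psi\wedge n)\vee(-n)$, dominated convergence, and the fact that a pointwise limit of nondecreasing functions is nondecreasing. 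Once \eqref{eq:quantile-sandwich} is in place, however, the monotone coupling settles everything with essentially no further computation.
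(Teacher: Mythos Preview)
Your proof is correct and uses the same quantile-coupling idea as the paper: represent the conditional law via the quantile function of $X$ given $X+Y=s$ and show the coupled pair is coordinatewise nondecreasing in $s$. Your execution is actually tighter than the paper's sketch: the paper asserts $I(s)=\int_0^1\Psi(x_{u,s},y_{u,s})\,du$ with $x_{u,s},y_{u,s}$ the \emph{separate} conditional quantiles of $X$ and $Y$, but since conditionally $Y=s-X$ that pair does not carry the correct joint law; you avoid this by working with $(q_s(V),\,s-q_s(V))$ and extracting the two-sided bound $q_{s_1}(u)\le q_{s_2}(u)\le q_{s_1}(u)+(s_2-s_1)$, whose right half (from the monotonicity of $S_Y$) is exactly the missing ingredient that makes $s\mapsto s-q_s(V)$ nondecreasing.
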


\begin{proof}
\textbf{(i)} implies \textbf{(ii)} is given by taking $\Psi (X,Y)=\mathbf{1}_{\left\{ X>x\right\} }$ 
and then by using the symmetry in $X$ and $Y$. 
To prove that \textbf{(ii)} implie \textbf{(i)}, let $F_s^{-1}$ and $G_s^{-1} $ be the conditional 
quantile functions of $X$ and $Y$ given $X+Y =s$;  that is, for $0 < u < 1$
\begin{eqnarray*}
x_{u,s} & \equiv&  F_s^{-1} (u) \equiv \inf \{ x : \ F_s (x) \ge u \},   \\
y_{u,x} & \equiv&  G_s^{-1} (u) \equiv \inf \{ y : \ G_s (y) \ge u \},
\end{eqnarray*}
where $F_s (x) = P(X\le x | X+Y = s)$  and $G_s (y) \equiv P( Y \le y | X+Y =s)$.  
Then, by (ii), for $t < s$,
\begin{eqnarray*}
u & \le & P( X \le F_s^{-1} (u) | X+Y = s) \\
& = & 1 - P(X > F_s^{-1} (u) | X+Y = s ) \\
& \le & 1 - P(X > F_s^{-1} (u) | X+Y = t ) \\
& = & P( X \le F_s^{-1} (u) | X+ Y =t ),
\end{eqnarray*} 
and hence $x_{u,t} = F_t^{-1} (u)\le F_s^{-1}  (u) = x_{u,s}$.  
By symmetry $y_{u,s}$ is also nondecreasing in $s$.  
Thus 
\begin{equation}
\mathbb{E}\left[ \Psi \left( X,Y\right) \left\vert X+Y=s\right. \right]
=\int_{u \in (0,1)} \Psi \left( x_{u ,s},y_{u,s}\right) d u \label{quantile_transf}
\end{equation}%
Then \textbf{(i)} follows from (\ref{quantile_transf})\textbf{.}
\end{proof}

In this paper, condition \textbf{(i)} of Proposition \ref{prop_equi_dim_2}\
is referred to as Efron's ``monotonicity property". \cite{MR0171335} used
Proposition \ref{prop_equi_dim_2} to prove the monotonicity property for
independent log-concave variables $X$ and $Y$. In this paper, we extend the
validity of Efron's monotonicity property to more general pairs $\left(X,Y\right) $ 
on the plane, see Section \ref{section:general_monoton_prop}.  
Our main result, Theorem~\ref{Theorem_rep_cond_quantiles-ver2}, 
provides a condition on the joint density $h$ 
of $(X,Y)$, in terms of the second derivatives 
of $\varphi \equiv (- \log h)$ which imply \textbf{(ii)} of 
Proposition~\ref{prop_equi_dim_2}.
In particular, in Section \ref{ssection_gener_Efron_examples} we exhibit
examples of random pairs satisfying the monotonicity property that are
neither log-concave nor mutually independent. 
We also recover by different techniques Efron's monotonicity for independent 
log-concave variables in Section 3.2. 
Then we obtain quantitative lower-bounds for the derivative of Efron's 
$I$ function in Section 5. 

Our proofs rely on several key covariance identities which are stated in 
Section~\ref{section:rep_form_cov}.  
These identities, originating in \cite{MR0004426}  
(see also \cite{MR1307621} for a translation of the German original), build on 
more recent results in the log-concave case due to \cite{Menz-Otto:2013}.

We conclude the paper in Section~\ref{section:KernelRepresentProofs} 
by providing complete proofs of the key 
covariance identities stated in Section~\ref{section:rep_form_cov}.

\begin{rem}\label{remark_1}
It is easily seen, through standard approximation arguments, that point 
\textbf{(ii)} of Proposition \ref{prop_equi_dim_2} is equivalent to
nondecreasingness in $s$ of the functions 
\begin{equation}
\mathbb{E}\left[ \varphi \left( X\right) \left\vert X+Y=s\right. \right] \ \ 
\text{ and }\ \ \mathbb{E}\left[ \varphi \left( Y\right) \left\vert
X+Y=s\right. \right]  \label{One-variable_cond_means}
\end{equation}
for every nondecreasing function $\varphi $. This implies that in point 
\textbf{(i), }one can take without loss of generality functions $\Psi $ to
depend only on one variable. 
A simple proof of the monotonicity of
functionals given in (\ref{One-variable_cond_means}) for independent
log-concave variables $X$ and $Y$ is established in \cite{SauWel2014} using
symmetrization arguments.
\end{rem}

Efron's monotonicity property appears naturally in the theory of log-concave
measures, see \cite{SauWel2014}. Indeed, it has been used by \cite{MR2327839}
and \cite{MR3030616} to prove preservation of ultra log-concavity under
convolution (for discrete random variables), and by \cite{Wellner-2013} to
give a proof that log-concavity and strong log-concavity are preserved by
convolution in the one-dimensional continuous setting. These proofs operate
at the level of scores or relative scores (first derivative of the convex
potentials of the log-concave measures). Without reliance on derivatives,
the classical proof of preservation of log-concavity under convolution
consists of a direct application of Pr\'{e}kopa's theorem, \cite{MR0315079}.
A proof of preservation of log-concavity under convolution can also be
derived via the Brascamp-Lieb inequality (\cite{MR0450480}), that operates
at the second derivative level of the convex potentials and that is the
local form of the Brunn-Minkowski inequality.

Efron's monotonicity property can also be viewed as a monotonicity property
for the collection of conditional laws with respect to the stochastic order
(Theorem 6.B.9. in 
\cite{ShakedShanthi:07}, see also 
\cite{Shanthi:87}, 
\cite{shanthi:87b}, 
\cite{RinottSam:91}, 
\cite{Dubhashi:08}, 
\cite{ZhuangYaoHu:10}). 

Efron's monotonicity property has been applied in the context of 
negative dependence theory (\cite{KumarPro:83}, 
\cite{BlockSavitsShaked:85}, 
\cite{BolandHollanderJoag:96}, 
\cite{HuHu:99}, \cite{Pemantle:00}),
in combinatorial probability (\cite{Fill:88}, \cite{Liggett:00}, \cite{MR2327839}, \cite{GoldMartinSpan}, 
\cite{GrossMansTuckWang:15}), 
in queueing theory (\cite{shanthikumar1986effect}, 
\cite{ShanthiYao:87}, 
\cite{Masuda:95}, 
\cite{PestienRam:02}, 
\cite{DaduSek:04}),
 in Economic theory (\cite{ederer2010feedback}, 
 \cite{wang2012capacity}, 
 \cite{DenuitDhaene:12}), 
  in
the theory of statistical testing (\cite{Berk:78}, 
\cite{CohenSack:87}, 
\cite{CohenSack:90}, 
\cite{BenjaminiHeller:08}, 
\cite{heller2016post}), 
as well as other statistical estimation problems (\cite{stefansk1992monotone}, 
\cite{HwangStef:94}, 
\cite{Ma:99}).

Hence any extension of Efron's monotonicity property may have several applications in statistical theory - and also beyond. 
The questions and issues described in \cite{HwangStef:94} 
provide an interesting example of the statistical relevance of the results that we obtain below. 
Let us briefly recall the setting of their paper.

\cite{HwangStef:94} study the preservation of monotonicity of regression functions under measurement errors. 
Let $(T,X,U)$ be a triple of random variables where $T$ is a response variable,
$U$ is an (unobserved) covariate, $X = U+Z$ is the covariate $U$ with additive ``measurement error'' $Z$.  
Hwang and Stefanski discuss  monotonicity of 
$\EE[T|X=x]$ under the assumption that $\EE[T|U=u]$ is monotone. 
Preservation of monotonicity is analyzed relative to the behavior of the measurement error 
$Z:=X-U$.  
Then the relationship between the ``true" regression function and the regression 
function with ``measurement error'' is important for 
modeling purposes (see \cite{MR857147}, \cite{MR1087101}, \cite{MR1091848}, \cite{MR1064421} and \cite{MR2243417}).

Using Efron's monotonicity property, Hwang and Stefanski show that monotonicity of the regression function 
is preserved when a log-concave error $Z$ in measurement is made independently 
of a log-concave covariate $U$.
Preservation of monotonicity of a regression function will be 
further discussed below in light of our results.

\section{Covariance Identities}
\label{section:rep_form_cov}

Our goal is 
to prove the monotonicity property with the greatest generality in
terms of the law of the pair of random variables involved. By Proposition %
\ref{prop_equi_dim_2} above, it suffices to focus on the monotonicity of the
conditional survival functions in  (\ref{def_S}) of \textbf{(ii)}. 
To do this in Section~\ref{section:general_monoton_prop} 
we will use several helpful identities for covariances which are summarized below.
Proofs of the new identities in our list, along with examples and counterexamples, 
will be given in Section~\ref{section:KernelRepresentProofs}.
 
It is worth noting that covariance identities have an interest by themselves since they provide powerful tools to derive deviation and concentration inequalities (se for instance \cite{MR1836739}, \cite{MR2060306}, \cite{MR1920106} and also \cite{MR1849347} Section 5.5) or functional inequalities (\cite{Saumard-Wellner:17}). From this point of view, the use of covariance identities to prove extensions of Efron's monotonicity property may be seen as a new connection of covariance identities with functional inequalities.

\begin{prop}
\label{prop_Hoeffding-Shorack}
Suppose that $(X,Y)$ have joint distribution function $H$ on $\RR^2$ with marginal 
distribution functions $F$ and $G$.  Suppose that $a, b $ are non-decreasing functions from 
$\RR$ to $\RR$ with $Var(a(X)) < \infty$ and $Var(b(Y)) < \infty$.  Then
\begin{eqnarray}
Cov[a(X) , b(Y)] = 
\int\!\!\!\int_{\mathbb{R}^{2}} \left \{ H(x,y) - F(x) G(y) \right \} d a(x) d b(y) .
\label{Hoeffding-Shorack}
\end{eqnarray}
\end{prop}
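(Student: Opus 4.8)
The plan is to prove the identity

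$$Cov[a(X), b(Y)] = \int\!\!\!\int_{\RR^2} \{H(x,y) - F(x)G(y)\}\, da(x)\, db(y)$$

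via the classical Hoeffding trick, representing $a(X) - a(X')$ and $b(Y) - b(Y')$ as integrals of indicators, where $(X',Y')$ is an independent copy of $(X,Y)$. First I would write, for any $x, x' \in \RR$,

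$$a(x) - a(x') = \int_{\RR} \left( \mathbf{1}_{\{t < x\}} - \mathbf{1}_{\{t < x'\}} \right) da(t),$$

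which holds because $a$ is nondecreasing and right-continuity/left-continuity conventions can be fixed so that the Lebesgue–Stieltjes measure $da$ recovers the increments of $a$; an analogous formula holds for $b$ with integrator $db(u)$. Then I would start from the standard identity

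$$Cov[a(X), b(Y)] = \tfrac{1}{2}\, \EE\left[ \left( a(X) - a(X') \right)\left( b(Y) - b(Y') \right) \right],$$

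where $(X', Y')$ is an independent copy of $(X, Y)$, valid once the stated finite-variance assumptions are in place.

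Next I would substitute the two integral representations into this expression, obtaining a double Lebesgue–Stieltjes integral, and then apply Fubini's theorem to exchange the expectation with the $da \otimes db$ integration. This yields

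$$Cov[a(X), b(Y)] = \tfrac{1}{2} \int\!\!\!\int_{\RR^2} \EE\left[ \left( \mathbf{1}_{\{t < X\}} - \mathbf{1}_{\{t < X'\}} \right)\left( \mathbf{1}_{\{u < Y\}} - \mathbf{1}_{\{u < Y'\}} \right) \right] da(t)\, db(u).$$

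The inner expectation expands into four terms; using independence of $(X',Y')$ from $(X,Y)$ and the fact that $X'$ has distribution $F$ while $Y'$ has distribution $G$, each term reduces to a product of survival functions or to $\pr(X > t, Y > u)$. Collecting terms, the inner expectation equals $2\{ \pr(X>t, Y>u) - \pr(X>t)\pr(Y>u) \}$, and then rewriting survival functions in terms of the distribution functions $H$, $F$, $G$ (using $\pr(X>t,Y>u) - \bar F(t)\bar G(u) = H(t,u) - F(t)G(u)$, which follows from the inclusion–exclusion relation $\pr(X>t,Y>u) = 1 - F(t) - G(u) + H(t,u)$) gives exactly the claimed integrand.

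The main obstacle is justifying the application of Fubini's theorem, i.e. checking absolute integrability of the integrand $\EE[\,\cdots\,]$ against $|da| \otimes |db|$ over $\RR^2$. Since $a$ and $b$ are monotone, $da$ and $db$ are genuine (positive) measures, so there is no sign issue; the remaining point is integrability at infinity. Here one uses that $\EE[(a(X)-a(X'))^2] = 2\,Var(a(X)) < \infty$ and similarly for $b$, together with the Cauchy–Schwarz inequality applied to the expectation inside, to dominate the iterated integral by $\left(\int\!\!\int |\EE[(\mathbf{1}_{\{t<X\}} - \mathbf{1}_{\{t<X'\}})^2]| \, da(t)\, db(u)\right)^{1/2}$-type quantities, each of which is finite by the variance hypotheses. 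A secondary technical point is the boundary/continuity convention for $a$ and $b$ at their jump points, which affects the precise meaning of the integral representation of $a(x) - a(x')$; this is handled by noting that the set of atoms of $da$ is countable and adjusting the integrand on a $da\otimes db$-null set does not change the integral, so any consistent convention yields the same result. Once Fubini is justified, the remainder is the routine four-term expansion described above.
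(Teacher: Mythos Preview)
Your proposal is correct and follows essentially the same strategy as the paper: represent the increments of $a$ and $b$ as Lebesgue--Stieltjes integrals of indicator differences, interchange expectation and integration via Fubini (justified by the finite-variance hypotheses and Cauchy--Schwarz), and identify the resulting integrand with $H-FG$. The only cosmetic difference is that the paper works directly with the centered representation $a(X)-Ea(X)=-\int_{\RR}(1_{[X\le x]}-F(x))\,da(x)$ rather than introducing an independent copy $(X',Y')$, so no factor of $\tfrac12$ appears and the inner expectation $E[(1_{[X\le x]}-F(x))(1_{[Y\le y]}-G(y))]=H(x,y)-F(x)G(y)$ drops out immediately.
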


The identity (\ref{Hoeffding-Shorack}) can be found in \cite{MR1762415}, 
section 7.4, formula page 117, but it has its origins in 
\cite{MR0004426}  
(see also \cite{MR1307621} for a translation of the German original)
This identity has several useful corollaries.  
We begin 
with the original inequality due to \cite{MR0004426}, by taking $a$ and $b$ to be identity functions.
\medskip

\begin{cor}
\label{cor:Hoeffding}
(Hoeffding).  When $a(x) =x$ and $b(y)=y$ for all $x,y \in \RR$, 
\begin{eqnarray*}
Cov[X,Y] =  \int\!\!\!\int_{\mathbb{R}^{2}}  \left \{ H(x,y) - F(x) G(y) \right \} d x d y.
\end{eqnarray*}
\end{cor}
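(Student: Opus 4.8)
The plan is to derive this as an immediate specialization of Proposition \ref{prop_Hoeffding-Shorack}. First I would check that the hypotheses of the proposition are met: taking $a(x) = x$ and $b(y) = y$, these are clearly non-decreasing functions from $\RR$ to $\RR$, and the requirements $Var(a(X)) < \infty$ and $Var(b(Y)) < \infty$ become exactly $Var(X) < \infty$ and $Var(Y) < \infty$, which is the implicit standing assumption needed for $Cov[X,Y]$ to be well-defined in the first place. Then I would substitute into formula (\ref{Hoeffding-Shorack}): since $da(x) = dx$ and $db(y) = dy$ (the Lebesgue–Stieltjes measures induced by the identity functions are just Lebesgue measure), the left-hand side becomes $Cov[X,Y]$ and the right-hand side becomes $\int\!\!\int_{\RR^2} \{H(x,y) - F(x)G(y)\}\, dx\, dy$, which is precisely the claimed identity.

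Since essentially no work is required beyond this substitution, there is no serious obstacle; the only point deserving a word of care is the integrability of the integrand, i.e.\ that $\int\!\!\int_{\RR^2} |H(x,y) - F(x)G(y)|\, dx\, dy < \infty$. This is guaranteed by the finiteness of the variances and is in any case already subsumed in the validity of (\ref{Hoeffding-Shorack}), so I would simply invoke Proposition \ref{prop_Hoeffding-Shorack} directly. One could alternatively remark that this is the historically original form of the identity, due to \cite{MR0004426}, from which Proposition \ref{prop_Hoeffding-Shorack} is the natural monotone-transformation generalization; but for the proof itself nothing more than the specialization argument above is needed.
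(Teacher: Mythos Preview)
Your proposal is correct and matches the paper's own proof exactly: the paper simply states that Corollary~\ref{cor:Hoeffding} follows immediately from Proposition~\ref{prop_Hoeffding-Shorack} by taking $a$ and $b$ to be the identity functions. Your additional remarks on checking the hypotheses and on integrability are fine but go slightly beyond what the paper bothers to say.
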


\begin{cor}
\label{cor:Shorack-Conseq} 
(a)  When $Y=X$ almost surely so that $G=F$ and $H(x,y) = F(x \wedge y)$, 
and $a, b$ are nondecreasing and left-continuous,
\begin{eqnarray}
Cov[a(X) , b(X)] 
& = &  \int\!\!\!\int_{\mathbb{R}^{2}} \left \{ F(x \wedge y) - F(x) F(y) \right \} d a(x) d b(y) \nonumber \\
& = &  \int\!\!\!\int_{\mathbb{R}^{2}}  K_{\mu} (x,y) d a(x) d b(y) 
\label{cor:A-B-Kernel}
\end{eqnarray}
where the non-negative and symmetric kernel $K_{\mu }$ on $\mathbb{R}^2$ is defined by   
\begin{equation}
K_{\mu }\left(x,y\right) =F\left( x\wedge y\right) -F\left( x\right) F\left( y\right), 
\qquad  \mbox{for all} \ \ \left( x,y\right) \in \mathbb{R}^{2} .
\label{def_kernel}
\end{equation}
and where 
$F\left( x\right) =F_{\mu }\left( x\right) =\mu \left( \left( -\infty ,x\right] \right) $ is the  distribution function associated with the 
probability measure $\mu $ on $(\mathbb{R}, {\cal B})$. \\
(b)  Moreover, (\ref{cor:A-B-Kernel}) continues to hold if $a = a_1 -a_2$, $b=b_1 -b_2$ where $a_j \in L_p (F)$ and $b_j \in L_q (F)$ for $j=1,2$
with $p^{-1} + q^{-1} =1$.
\end{cor}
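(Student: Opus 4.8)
The plan is to obtain both parts directly from Proposition~\ref{prop_Hoeffding-Shorack}, using only bilinearity of the covariance and of the double Stieltjes integral; no genuinely new analysis is needed. For part~(a): when $Y=X$ almost surely the joint distribution function of $(X,Y)$ is $H(x,y)=\PP(X\le x,\ X\le y)=\PP(X\le x\wedge y)=F(x\wedge y)$ and both marginals equal $F$, so that $H(x,y)-F(x)G(y)=F(x\wedge y)-F(x)F(y)=K_{\mu}(x,y)$. Hence~(\ref{cor:A-B-Kernel}) is precisely identity~(\ref{Hoeffding-Shorack}) specialised to the degenerate pair $(X,X)$, and it holds as stated whenever $a(X)$ and $b(X)$ have finite variance. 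The general left-continuous nondecreasing case I would then reach by truncation: apply the identity just obtained to $a_M:=(a\wedge M)\vee(-M)$ and $b_M:=(b\wedge M)\vee(-M)$, whose Lebesgue--Stieltjes measures $da_M,\,db_M$ increase to $da,\,db$, and let $M\to\infty$; since the kernel $K_{\mu}(x,y)=F(x\wedge y)\bigl(1-F(x\vee y)\bigr)$ is nonnegative, monotone convergence handles the right-hand side, while on the left the domination $|a_M(X)\,b_M(X)|\le|a(X)\,b(X)|$ allows passage to the limit. (Left-continuity of $a$ and $b$ is what makes the Stieltjes measures $da,\,db$ match the jumps of $a(X),\,b(X)$ exactly, so that the identity is correct even at the atoms of $\mu$.)

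For part~(b): write $a=a_1-a_2$ and $b=b_1-b_2$ with the $a_j$ nondecreasing, left-continuous and in $L_p(F)$, and the $b_j$ nondecreasing, left-continuous and in $L_q(F)$. By H\"older's inequality $\int_{\RR}|a_i\,b_j|\,dF\le\|a_i\|_{L_p(F)}\|b_j\|_{L_q(F)}<\infty$, and since $F$ is a probability measure also $a_i,b_j\in L_1(F)$; hence each of the four covariances $\cov[a_i(X),b_j(X)]$ is a finite real number, and part~(a) gives $\cov[a_i(X),b_j(X)]=\int\!\!\!\int_{\RR^{2}}K_{\mu}(x,y)\,da_i(x)\,db_j(y)$ for all $i,j\in\{1,2\}$, with each integral finite. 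Expanding by bilinearity,
\[
\cov[a(X),b(X)]=\sum_{i,j}(-1)^{i+j}\cov[a_i(X),b_j(X)]=\sum_{i,j}(-1)^{i+j}\int\!\!\!\int_{\RR^{2}}K_{\mu}\,da_i\,db_j,
\]
and the last expression is by definition $\int\!\!\!\int_{\RR^{2}}K_{\mu}\,da\,db$; its four pieces being finite there is no $\infty-\infty$ ambiguity, so this is the asserted identity, and it does not depend on the particular decompositions of $a$ and $b$ because it coincides with $\cov[a(X),b(X)]$.

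The only step that is not purely formal is the integrability bookkeeping in part~(b): one must check that the four covariances are finite before splitting the covariance, so as to exclude an $\infty-\infty$ ambiguity, and this is exactly what the conjugacy $p^{-1}+q^{-1}=1$ buys through H\"older's inequality. Otherwise the corollary is soft --- its real content is Proposition~\ref{prop_Hoeffding-Shorack} itself, whose proof is deferred to Section~\ref{section:KernelRepresentProofs}. The remaining technicalities, namely reconciling the left-continuity of $a,b$ with the right-continuity of $F$ inside $K_{\mu}$, and justifying the monotone-convergence passage in the truncation step of part~(a), are routine and I would dispatch them briefly.
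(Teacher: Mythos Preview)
Your proof is correct; part~(a) is identical to the paper's (direct specialisation of Proposition~\ref{prop_Hoeffding-Shorack} with $H(x,y)=F(x\wedge y)$), while for part~(b) you take a different route. The paper argues via polarisation: it first establishes the \emph{variance} identity $\var(a(X))=\int\!\!\int K_\mu\,da\,da$ for $a=a_1-a_2$ by expanding $\var(a_1-a_2)=\var(a_1)-2\cov(a_1,a_2)+\var(a_2)$ and applying~(\ref{cor:A-B-Kernel}) to each monotone piece, and then recovers the covariance identity from $\cov(a,b)=\tfrac14\{\var(a+b)-\var(a-b)\}$. You instead expand $\cov(a,b)$ directly into the four cross-terms $\cov(a_i,b_j)$ and sum. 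Your route is shorter and, combined with the truncation extension you insert in~(a), it cleanly handles the full range of conjugate exponents $p^{-1}+q^{-1}=1$: you only need each $\cov(a_i(X),b_j(X))$ to be finite, which H\"older gives, whereas the paper's polarisation step implicitly leans on $a_j,b_j\in L_2(F)$ to make $\var(a_1\pm a_2)$ finite. The price you pay is the extra monotone-convergence truncation argument in~(a), which the paper does not need because its detour through variances keeps everything inside the $L_2$ framework of Proposition~\ref{prop_Hoeffding-Shorack}.
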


Now we specialize Corollary~\ref{cor:Shorack-Conseq} slightly by taking $a$ to be an indicator function.
\smallskip

\begin{cor}
\label{cor:IndicatorAFcn}
Suppose that $b = b_1 - b_2$ where $b_1, b_2$ are left-continuous and non-decreasing with either 
$b_j \in L_2 (F)$ for $j=1,2$ or $b_j \in L_1 (F)$ for $j=1,2$, and let $z \in \RR$. 
Then, with $a(x) = 1_{(-\infty,z]} (x)$,
\begin{eqnarray}
F(z) \int_{\RR} b dF - \int_{(-\infty,z]} b dF 
 =  - \ Cov[ 1_{[X \le z]} , b(X) ] = \int_{\RR} K_{\mu} (z,y) db(y) ,
\label{Indicator-MRLCovIdent-1}
\end{eqnarray}
and
\begin{eqnarray}
- (1-F(z)) \int_{\RR} b dF + \int_{(z,\infty)} b dF 
 =  Cov[ 1_{[X > z]} , b(X) ] = \int_{\RR} K_{\mu} (z,y) db(y) .
\label{Indicator-MRLCovIdent-2}
\end{eqnarray}
Furthermore, if $b \in L_1 (F)$ is absolutely continuous, then
\begin{eqnarray}
F(z) \int_{\RR} b dF - \int_{(-\infty,z]} b dF 
 =  - \ Cov[ 1_{[X \le z]} , b(X) ] = \int_{\RR} K_{\mu} (z,y) b'(y) dy ,
\label{Indicator-MRLCovIdent-1-AbsC}
\end{eqnarray}
and
\begin{eqnarray}
- (1-F(z)) \int_{\RR} b dF + \int_{(z,\infty)} b dF 
 =  Cov[ 1_{[X > z]} , b(X) ] = \int_{\RR} K_{\mu} (z,y) b'(y) dy .
\label{Indicator-MRLCovIdent-2-AbsC}
\end{eqnarray}
\end{cor}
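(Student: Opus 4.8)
The plan is to reduce all four identities to a single one by peeling off the definition of covariance, and then to obtain that one identity by specializing Corollary~\ref{cor:Shorack-Conseq} with $a$ taken to be an indicator. First I would note that, since $\mathbf 1_{[X\le z]}=1-\mathbf 1_{[X>z]}$ and a covariance is unchanged by adding a constant, one has $-\cov[\mathbf 1_{[X\le z]},b(X)]=\cov[\mathbf 1_{[X>z]},b(X)]$; and expanding either covariance using $\EE[\mathbf 1_{[X\le z]}]=F(z)$, $\EE[b(X)]=\int_\RR b\,dF$ and $\EE[\mathbf 1_{[X\le z]}b(X)]=\int_{(-\infty,z]}b\,dF$ reproduces exactly the left-most expressions in \eqref{Indicator-MRLCovIdent-1} and \eqref{Indicator-MRLCovIdent-2}. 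So the whole content of the first two displays is the single kernel identity $\cov[\mathbf 1_{[X>z]},b(X)]=\int_\RR K_\mu(z,y)\,db(y)$.

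To prove this I would invoke Corollary~\ref{cor:Shorack-Conseq}(b) with $a(x)=\mathbf 1_{(z,\infty)}(x)$. This $a$ is nondecreasing, left-continuous and bounded, hence a member of $L_p(F)$ for every $p\in[1,\infty]$; taking $a_1=a$ and $a_2\equiv 0$, the conjugate-exponent requirement of part (b) is met in both cases of the hypothesis on $b$ (use $p=q=2$ when $b_j\in L_2(F)$, and $p=\infty$, $q=1$ when $b_j\in L_1(F)$). Corollary~\ref{cor:Shorack-Conseq}(b) then yields
\begin{equation*}
\cov[\mathbf 1_{[X>z]},b(X)]=\cov[a(X),b(X)]=\int\!\!\!\int_{\RR^2}K_\mu(x,y)\,da(x)\,db(y),
\end{equation*}
and since the Lebesgue--Stieltjes measure of the left-continuous unit step $\mathbf 1_{(z,\infty)}$ is the point mass $\delta_z$ at $z$, the inner ($x$-)integration is simply evaluation at $z$ and produces $\int_\RR K_\mu(z,y)\,db(y)$; there is no Fubini issue because $\delta_z$ is a point mass and $K_\mu$ is bounded (by $1/4$). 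As a sanity check on the convention $da=\delta_z$, taking $b=a$ the formula returns $K_\mu(z,z)=F(z)(1-F(z))$, which is indeed the variance of $\mathbf 1_{[X>z]}$. Equivalently, feeding $a=\mathbf 1_{(-\infty,z]}=1-\mathbf 1_{(z,\infty)}$, whose Stieltjes measure is $-\delta_z$, straight into part (b) produces \eqref{Indicator-MRLCovIdent-1} in one step.

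For the absolutely continuous identities \eqref{Indicator-MRLCovIdent-1-AbsC}--\eqref{Indicator-MRLCovIdent-2-AbsC} I would just substitute $db(y)=b'(y)\,dy$ into \eqref{Indicator-MRLCovIdent-1}--\eqref{Indicator-MRLCovIdent-2}. The only snag is that an absolutely continuous $b\in L_1(F)$ need not have bounded variation, so it is not automatically a difference $b_1-b_2$ of monotone functions covered above; I would circumvent this either by truncation --- apply the identities already proved to $(-n)\vee b\wedge n$, or to $b$ restricted to $[-n,n]$, and let $n\to\infty$, using that $K_\mu(z,\cdot)$ is bounded and vanishes at $\pm\infty$ while $b\in L_1(F)$ --- or, more directly, by integrating $K_\mu(z,y)b'(y)$ by parts over $(-\infty,z]$ and $(z,\infty)$ separately, where $K_\mu(z,y)$ equals $(1-F(z))F(y)$ and $F(z)(1-F(y))$ respectively; the boundary contributions at $\pm\infty$ vanish because $b\in L_1(F)$, and the remaining terms reassemble into $F(z)\int_\RR b\,dF-\int_{(-\infty,z]}b\,dF$. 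The only real subtleties in the whole argument are bookkeeping ones: fixing the Lebesgue--Stieltjes convention for the left-continuous indicator so that $da=\pm\delta_z$ with the correct sign and atom, and making sense of the (in general only conditionally convergent) improper integral $\int_\RR K_\mu(z,y)b'(y)\,dy$ when $b$ is not assumed to be of bounded variation.
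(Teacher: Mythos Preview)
Your proof is correct and takes essentially the same route as the paper: both specialize Corollary~\ref{cor:Shorack-Conseq} by choosing $a$ to be an indicator (the paper writes $a(x)=F(z)-1_{[x\le z]}$, which differs from your $1_{(z,\infty)}$ only by a constant), observe that $da$ is a unit point mass at $z$, and read off the kernel integral. The one place you diverge is the absolutely continuous case: the paper simply asserts that such a $b$ has bounded variation and invokes \cite{MR1762415}, Exercise~4.1, to arrange $b_j\in L_1(F)$, whereas you flag that absolute continuity on all of $\RR$ does not automatically yield bounded variation and propose truncation or a direct integration by parts instead --- your caution here is justified, and either of your suggested fixes works.
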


\begin{rem}
Note that the quantities appearing on the left sides in (\ref{Indicator-MRLCovIdent-1}) 
and (\ref{Indicator-MRLCovIdent-2}) have 
interpretations in terms of {\sl mean residual life} or {\sl reversed mean residual life}
functions:  in particular, the left side of (\ref{Indicator-MRLCovIdent-2}) can be written as 
\begin{eqnarray*}
 \left ( E \{ h(X) | X>z \} - E\{ h(X) \} \right ) (1-F(z)),
\end{eqnarray*}
while the left side of (\ref{Indicator-MRLCovIdent-1}) can be written as 
\begin{eqnarray*}
- \left ( E \{ h(X) | X \le z \} - E\{ h(X) \} \right ) F(z) .
\end{eqnarray*}
\end{rem}
\smallskip

Our next corollary, a further corollary of Corollary~\ref{cor:Shorack-Conseq}, 
allows the functions $a$ and $b$ to be differences of left-continuous and non-decreasing 
functions, or absolutely continuous.  
\medskip

\begin{cor}
\label{cor:DifferenceVersion1}  
(Menz and Otto)
If $a= a_1 - a_2$ and $b = b_1 - b_2$ where $a_j  \in L_p (F)$ and $b_j  \in L_q (F)$ for $j=1,2$  with  $p^{-1} + q^{-1} =1$, 
then (\ref{cor:A-B-Kernel}) continues to hold.   Moreover, if $a$ and $b$ are absolutely continuous with $a \in L_p (F)$ 
and $b \in L_q (F)$, then 
\begin{eqnarray}
Cov[a(X) , b(X)] 
& = &  \int\!\!\!\int_{\mathbb{R}^{2}} a'(x)  K_{\mu} (x,y) b'(y) d x d y .
\label{cor:A-B-KernelAbsCont}
\end{eqnarray}
\end{cor}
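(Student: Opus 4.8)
The plan is to dispatch the first assertion quickly and then concentrate on the ``moreover'' part. The first assertion is exactly Corollary~\ref{cor:Shorack-Conseq}(b): both sides of (\ref{cor:A-B-Kernel}) are bilinear in the signed Lebesgue--Stieltjes measures $da = da_1 - da_2$ and $db = db_1 - db_2$, so it suffices to know the identity when $a$ and $b$ are themselves nondecreasing and left--continuous, which is Corollary~\ref{cor:Shorack-Conseq}(a), provided each of the four resulting double integrals $\int\!\!\int K_\mu \, da_i\, db_j$ converges absolutely. The latter is where the hypothesis $a_i \in L_p(F)$, $b_j \in L_q(F)$ enters: using the elementary bound $0 \le K_\mu(x,y) \le F(x \wedge y)(1 - F(x\vee y))$ from (\ref{def_kernel}) together with the representation $\int_{\RR} K_\mu(z,y)\, db_j(y) = Cov[\,1_{[X>z]}, b_j(X)\,]$ of Corollary~\ref{cor:IndicatorAFcn} and Hölder's inequality, one gets a finite majorant for each double integral. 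So the only genuinely new content is the absolutely continuous case.

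For that case I would \emph{not} try to deduce the formula from the first assertion: an absolutely continuous function in $L_p(F)$ need not have bounded variation (e.g.\ $a(x) = 1-\cos x$ lies in $L_p(F)$ for every $p$ but admits no decomposition $a = a_1 - a_2$ into nondecreasing pieces), so the first assertion simply does not apply to all such $a$. Instead I would argue directly, with Corollary~\ref{cor:IndicatorAFcn} as the engine. Writing, for absolutely continuous $a$,
\begin{equation*}
a(X) = a(0) + \int_{(0,\infty)} a'(z)\, 1_{[X>z]}\, dz \; - \int_{(-\infty,0]} a'(z)\, 1_{[X \le z]}\, dz ,
\end{equation*}
discarding the constant (which is annihilated by $Cov[\cdot, b(X)]$), using $1_{[X\le z]} = 1 - 1_{[X>z]}$, and exchanging $Cov[\cdot, b(X)]$ with $\int dz$, one obtains
\begin{equation*}
Cov[a(X), b(X)] = \int_{\RR} a'(z)\, Cov[\,1_{[X>z]}, b(X)\,]\, dz .
\end{equation*}
Since $b \in L_q(F) \subseteq L_1(F)$ is absolutely continuous, equation (\ref{Indicator-MRLCovIdent-2-AbsC}) of Corollary~\ref{cor:IndicatorAFcn} lets me replace $Cov[\,1_{[X>z]}, b(X)\,]$ by $\int_{\RR} K_\mu(z,y)\, b'(y)\, dy$, and a second interchange of integrals then yields precisely (\ref{cor:A-B-KernelAbsCont}).

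The main obstacle is justifying the two interchanges of integration, i.e.\ establishing enough integrability. A crude bound is not sufficient here: by Cauchy--Schwarz/Hölder one only gets $|Cov[\,1_{[X>z]}, b(X)\,]| \lesssim \|b - \EE b(X)\|_{L_q(F)}\,\min\{F(z), 1-F(z)\}^{1/p}$, and $\int_{\RR} |a'(z)|\,\min\{F(z),1-F(z)\}^{1/p}\,dz$ can diverge even when $a \in L_p(F)$ — the finiteness of the iterated integral relies on a cancellation in the tails of $Cov[\,1_{[X>z]}, b(X)\,] = \int_z^\infty (1-F) - (1-F(z))\bigl(\EE b(X) - b(z)\bigr)$-type expressions rather than on a pointwise estimate. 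So the step I expect to be delicate is proving that the auxiliary function
$
\Phi(x) = \int_{-\infty}^{x} |a'(z)|\, F(z)\, dz + \int_{x}^{\infty} |a'(z)|\, (1-F(z))\, dz
$
lies in $L_p(F)$ whenever $a$ is absolutely continuous with $a \in L_p(F)$, which is a Hardy--type inequality proved by integration by parts against the tails of $F$; once that is in hand, Fubini applies and the proof is complete. An alternative, slightly heavier route that sidesteps the $1-\cos$ pathology is to first prove (\ref{cor:A-B-KernelAbsCont}) for compactly supported $a,b$ — where the first assertion applies and every integral converges trivially — and then pass to the limit along truncations $a_n = a\,\psi_n$, $b_n = b\,\psi_n$ with smooth cutoffs $\psi_n \uparrow 1$, using dominated convergence on the left via $|a_n(X)| \le |a(X)|$ and controlling the $\psi_n'$--terms on the right by the same tail estimate; the obstacle is the same integrability bound either way.
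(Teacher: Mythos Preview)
Your approach is genuinely different from the paper's, and the comparison is illuminating. The paper's proof of the ``moreover'' clause is precisely the route you rejected: it asserts that an absolutely continuous $a\in L_p(F)$ can be written as $a_1-a_2$ with each $a_j$ nondecreasing, left-continuous, and in $L_p(F)$, and then invokes Corollary~\ref{cor:IndicatorAFcn}. Your $1-\cos$ observation is a legitimate objection to that step, at least when $F$ has heavy tails: for Cauchy $F$, any decomposition $1-\cos=a_1-a_2$ with $a_j$ nondecreasing forces $a_j$ to grow at least linearly, whence $a_j\notin L_p(F)$ for any $p\ge 1$. So you have actually identified a gap in the paper's own argument, not merely avoided a dead end.

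However, your proposed Hardy-type inequality fails for the very same example. With $a(x)=1-\cos x$ and Cauchy $F$ one has $|a'(z)|=|\sin z|$ and $1-F(z)\sim 1/(\pi z)$, so $\int_x^\infty |\sin z|\,(1-F(z))\,dz=+\infty$ for every $x$; thus $\Phi\equiv+\infty$ and certainly $\Phi\notin L_p(F)$. The ``integration by parts against the tails of $F$'' you allude to cannot manufacture the needed cancellation when the tails are heavy and $a'$ does not decay. Of your two routes, the truncation argument is the more viable---on compact sets absolutely continuous functions \emph{are} of bounded variation, so the first assertion applies and the left side converges by dominated convergence---but passing to the limit on the right still requires showing that $\iint a_n' K_\mu b_n'\,dx\,dy$ converges to a well-defined limit, and in the Cauchy/$1-\cos$ case the target double integral is at best conditionally convergent. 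In short, both the paper's proof and your Hardy route need an additional hypothesis (light tails on $F$, or some growth control on $a',b'$) to be made rigorous; the corollary as stated is somewhat informal in this respect, and your instinct that the interchange is ``delicate'' is exactly right.
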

\medskip

The  covariance identity (\ref{cor:A-B-KernelAbsCont}) appeared in \cite{Menz-Otto:2013} 
(but without explicit assumptions on the functions $a$ and $b$). 
Note that this inequality implies a version of the FKG inequality:  
if $a $ and $b$ are non-decreasing, then $a' (x) \ge 0 $ and $b'(y) \ge 0$ so that the right side 
of (\ref{cor:A-B-KernelAbsCont}) is non-negative, and hence $E\{ a(X) b(Y) \} \ge E\{ a(X) \} E \{ b(Y) \}$.
\medskip

\smallskip

Our last set of covariance identities involve taking $b = \varphi^{\prime} $ 
in the case when $F$ has density $f = \exp (- \varphi)$.

\begin{cor}
\label{cor:KernelRecoveryOfDensity}
Suppose that $F$ has absolutely continuous density $f=\exp(-\varphi )$.\\
(a) If $\varphi $ has derivative $\varphi ^{\prime }$ which
satisfies $\varphi ^{\prime }=\varphi _{1}^{\prime }-\varphi _{2}^{\prime }$
where $\varphi _{j}^{\prime }\in L_{1}(F)$ for $j=1,2$ and 
$\varphi_{j}^{\prime }$ are left-continuous and non-decreasing, 
then 
\begin{equation}
\int_{\mathbb{R}}K_{\mu }(x,y)d\varphi ^{\prime }(y)=f(x).
\label{LogConcaveTypeIdentityGeneralForm}
\end{equation}
(b) If $\varphi $ has derivative $\varphi ^{\prime }\in L_{1}\left( F\right) 
$ which is absolutely continuous, then 
\begin{equation}
\int_{\mathbb{R}}K_{\mu }(x,y)\varphi ^{\prime \prime }(y)dy=f(x).
\label{LogConcaveTypeIdentitySecondDerivativeForm}
\end{equation}
(c) In particular, if $f$ is log-concave and absolutely continuous, $f=\exp (-\varphi )$ with $\varphi 
$ convex, then (\ref{LogConcaveTypeIdentityGeneralForm}) holds. \\
(d) If $f$ is log-concave and absolutely continuous, and $\varphi ^{\prime }$ is absolutely continuous,
then (\ref{LogConcaveTypeIdentitySecondDerivativeForm}) holds. 
\end{cor}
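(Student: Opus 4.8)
The plan is to obtain all four parts by applying Corollary~\ref{cor:IndicatorAFcn} with $b = \varphi'$ and using the elementary identity $\varphi' f = -f'$, valid almost everywhere since $\varphi = -\log f$ and $f$ is absolutely continuous. Under the hypotheses of part (a) the function $b = \varphi' = \varphi_1' - \varphi_2'$ is exactly of the form allowed in Corollary~\ref{cor:IndicatorAFcn} (the ``$b_j \in L_1(F)$'' alternative), so identity (\ref{Indicator-MRLCovIdent-2}) applies and its right-hand side is precisely $\int_\RR K_\mu(z,y)\, d\varphi'(y)$, the quantity we want to identify with $f(z)$.

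First I would evaluate the left-hand side of (\ref{Indicator-MRLCovIdent-2}). Writing $b\, dF = \varphi' f\, dx = -f'\, dx$ gives $\int_\RR b\, dF = -\int_\RR f'\, dx$ and $\int_{(z,\infty)} b\, dF = -\int_z^\infty f'\, dx$. The assumption $\varphi' \in L_1(F)$ says exactly that $f' = -\varphi' f$ is Lebesgue integrable, so the absolutely continuous function $f$ has finite total variation; hence the limits $f(\pm\infty)$ exist, and since $f$ is a nonnegative density they must both vanish. Therefore $\int_\RR f'\, dx = 0$ and $\int_z^\infty f'\, dx = -f(z)$, so the left side of (\ref{Indicator-MRLCovIdent-2}) reduces to $-(1-F(z))\cdot 0 + f(z) = f(z)$. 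Renaming $z$ as $x$ yields (\ref{LogConcaveTypeIdentityGeneralForm}) and proves (a); carrying out the same computation with (\ref{Indicator-MRLCovIdent-1}) instead gives the same value $f(z)$, a useful consistency check.

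For (b) I would run the identical computation of the left side, but starting from the absolutely continuous versions (\ref{Indicator-MRLCovIdent-1-AbsC})--(\ref{Indicator-MRLCovIdent-2-AbsC}) of Corollary~\ref{cor:IndicatorAFcn}, applied with the absolutely continuous function $b = \varphi' \in L_1(F)$; the left side is again $f(z)$, while the right side becomes $\int_\RR K_\mu(z,y)\varphi''(y)\, dy$. Equivalently, (b) follows from (a) because for absolutely continuous $\varphi'$ the Lebesgue--Stieltjes measure $d\varphi'$ coincides with $\varphi''(y)\, dy$.

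Finally, parts (c) and (d) are the specializations to log-concave $f = \exp(-\varphi)$, and the only real work is checking the integrability and monotonicity hypotheses of (a) and (b). Since log-concave densities are bounded and unimodal, $f'$ changes sign at most once, so $\int_\RR |f'|\, dx = 2\sup f < \infty$; equivalently $\varphi' \in L_1(F)$. Convexity of $\varphi$ makes (the left-continuous version of) $\varphi'$ non-decreasing, so one may take $\varphi_1' = \varphi'$, $\varphi_2' \equiv 0$ in (a) to obtain (c), and (d) then follows from (b) (or from (c) together with $d\varphi' = \varphi''\, dy$). The main obstacle is precisely this reduction step: one must accommodate the harmless modification of $\varphi'$ at its countably many discontinuities, confirm $\varphi' \in L_1(F)$ via boundedness of log-concave densities, and note that all the $dF$- and $d\varphi'$-integrals are effectively over the support of $f$, so a possible blow-up of $\varphi'$ at the endpoints of a bounded support plays no role; the other delicate point, justifying $f(\pm\infty)=0$, is quickly handled by the $L_1(F)$ hypothesis on $\varphi'$ as above.
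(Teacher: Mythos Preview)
Your proof is correct and follows essentially the same route as the paper's: apply Corollary~\ref{cor:IndicatorAFcn} with $b=\varphi'$, use $\varphi' f=-f'$ to evaluate the left side, and for (c)/(d) verify $\varphi'\in L_1(F)$ via $\int_{\RR}|f'|\,dx=2\sup f$ (the paper writes this as $2e^{-\varphi(x_0)}$ at the mode $x_0$). The only cosmetic differences are that the paper starts from (\ref{Indicator-MRLCovIdent-1}) rather than (\ref{Indicator-MRLCovIdent-2}), and it asserts $\int_{(-\infty,x]}\varphi' f\,dy=-f(x)$ directly whereas you supply the extra justification that $f(\pm\infty)=0$ from $f'\in L_1(\leb)$.
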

\smallskip

The condition $\varphi ^{\prime }\in L_{1}\left( F\right) $ in (b) of 
Corollary \ref{cor:KernelRecoveryOfDensity} is not overly restrictive. 
Indeed, it is equivalent to $f ^{\prime }\in L_{1}\left( \leb \right) $ since $f^{\prime}=-\varphi^{\prime}f$, 
and the latter condition is easily checked.  Also note that $\varphi^{\prime} = - f'/f$ is the ``score for location'' 
in statistics.  
\begin{rem}
Corollary~\ref{cor:KernelRecoveryOfDensity}(d) was given by \cite{Menz-Otto:2013}.
The other parts of Corollary~\ref{cor:KernelRecoveryOfDensity} are apparently new. 
\end{rem}

Proofs of Proposition~\ref{prop_Hoeffding-Shorack}  
and Corollaries~\ref{cor:Hoeffding} - \ref{cor:KernelRecoveryOfDensity}  will be given in Section~\ref{section:KernelRepresentProofs}.

\smallskip

\section{The monotonicity property for general measures on $\mathbb{R}^2$} 
\label{section:general_monoton_prop}

\subsection{A general result}

Proposition~\ref{prop_equi_dim_2} shows that monotonicity of $s \mapsto I(s)$ in (\ref{def_I}) is implied by monotonicity 
of the conditional survival functions $S_X( x; s)$ and $S_Y (y;s)$ in (\ref{def_S}). 
The following theorem provides a way of verifying the monotonicity of the conditional 
survival functions $s \mapsto S_X(x;s)$ and $s\mapsto S_Y( y; s)$ in terms of the elements of the Hessian matrix
$\mbox{Hess} (\varphi )$ where $\varphi = -\log h$ is the potential (perhaps non-convex) of the joint density $h$ of $(X,Y)$.  
First some further notation.  
We write 
\begin{eqnarray*}
\mbox{Hess} ( \varphi )(x,y) 
= \left ( \begin{array}{c c} \frac{\partial^2}{\partial x^2} \varphi (x,y) & \frac{\partial^2}{\partial y \partial x} \varphi (x,y)  \\
                                         \frac{\partial^2}{\partial x \partial y} \varphi (x,y) &  \frac{\partial^2}{\partial y^2 } \varphi (x,y) \end{array} \right ) 
    \equiv \left ( \begin{array}{c c}  \partial_{11}^2 \varphi  &    \partial_{21}^2 \varphi  \\  \partial_{12}^2 \varphi  &    \partial_{22}^2 \varphi  
                        \end{array} \right ) (x,y) 
\end{eqnarray*} 
for the Hessian of $\varphi \equiv - \log h$ where we suppose that $h>0$ on some open set $S \subset \RR^2$. 
We denote the conditional densities of $X$ given $X+Y = s$ and $Y$ given $X+Y=s$ by $f_1 (\cdot; s) \equiv f_1$ and
$f_2 (\cdot ; s) \equiv f_2$ respectively, and denote the corresponding conditional measures by 
$\mu_1 \equiv \mu_1 (\cdot ; s)$ and $\mu_2  \equiv \mu_2 (\cdot ; s)$ respectively.  
Thus 
\begin{eqnarray}
f_1 (x; s) & = & \exp (- \varphi_1 (x; s)) = \frac{h(x,s-x)}{\int_{\RR} h(x', s-x') dx'} , \label{CondDensOne}\\
f_2 (y; s) & = & \exp (- \varphi_2 (y; s)) = \frac{h(s-y,y)}{\int_{\RR} h(s-y', y') dy'} .  \label{CondDensTwo}
\end{eqnarray}
Furthermore we write 
\begin{eqnarray*}
\partial_1 \varphi (x,y) \equiv \frac{\partial}{\partial x} \varphi (x,y) \ \ \ \mbox{and} \ \ \ 
\partial_2 \varphi (x,y) \equiv \frac{\partial}{\partial y} \varphi (x,y).
\end{eqnarray*}
We will also need the following domination conditions:  
\medskip

\par\noindent
{\bf D1:}  Fix $s_0 \in \RR$.  Suppose that $x \mapsto \partial_2 \varphi (x,s-x)$ is absolutely continuous 
for $s \in [s_0-\epsilon, s_0 + \epsilon]\equiv S_{\epsilon}$ for some $\epsilon > 0$, 
and there exists a function $\overline{g} \in L_1 (Leb)$ such that 
\begin{eqnarray*}
| \partial_2 \varphi (x, s-x)  \exp ( - \varphi (x, s-x) ) | \le \overline{g}(x)
\end{eqnarray*}
for almost all $x\in \RR \cap S$ and all $s \in S_{\epsilon}$.
\medskip

\par\noindent
{\bf D2:}  Fix $s_0 \in \RR$.  Suppose that $y \mapsto \partial_1 \varphi (s-y,y)$ is absolutely continuous 
for $s \in S_{\epsilon}$ for some $\epsilon > 0$, 
and there exists a function $\overline{h} \in L_1 (Leb)$ such that 
\begin{eqnarray*}
| \partial_1 \varphi (s-y, y)  \exp ( - \varphi (s-y, y) ) | \le \overline{h}(y)
\end{eqnarray*}
for almost all $y\in \RR \cap S$ and all $s \in S_{\epsilon}$.
\medskip

\begin{thm} 
\label{Theorem_rep_cond_quantiles-ver2}
Suppose that D1 holds.  Then with $K_{1,0} \equiv K_{\mu_{1,s_0}}$,
\begin{eqnarray}
\partial_2 S_X (x,s_0) = \int_{\RR} K_{1,0} (x,x') ( \partial_{22}^2 \varphi - \partial_{12}^2 \varphi ) (x', s_0 -x' ) d x' . 
\label{DerivSsubX}
\end{eqnarray}
Suppose that D2 holds.  Then with $K_{2,0} \equiv K_{\mu_{2,s_0}}$,
\begin{eqnarray}
\partial_2 S_Y (y ; s_0)
&  = & \int_{\RR} K_{2,0} ( y', y ) ( \partial_{11}^2 \varphi - \partial_{21}^2 \varphi ) (s_0 -y' ,y') d y' 
             \label{DerivSsubY}\\
& = & \int_{\RR} K_{1,0} (s_0 - y, y') ( \partial_{11}^2 \varphi - \partial_{21}^2 \varphi ) (y', s_0 -y' ) d y' .
\label{DerivSsubYForm2}
\end{eqnarray}
Moreover, if both D1 and D2 hold, then 
\begin{eqnarray}
f_1 (x; s_0 ) & = & \partial_2 S_X (x;s_0) + \partial_2 S_Y (s_0-x; s_0) , \label{CondDens1_SumOfDeriv}\\
f_2 (y; s_0) & = & \partial_2 S_Y (y; s_0 ) + \partial_2 S_X (s_0-y; s_0)  \label{CondDens2_SumOfDeriv}.
\end{eqnarray}
\end{thm}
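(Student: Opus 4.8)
The plan is to obtain each displayed identity by differentiating the conditional survival function in the shift variable $s$ and then recognizing the answer as a one–dimensional covariance against an indicator, to which the formulas of Section~\ref{section:rep_form_cov} apply with $\mu = \mu_{1,s_0}$ or $\mu_{2,s_0}$. Concretely, I would start from
\[
S_X(x;s) = \int_x^\infty f_1(u;s)\,du, \qquad f_1(u;s) = \frac{h(u,s-u)}{D(s)}, \quad D(s) = \int_\RR h(u',s-u')\,du',
\]
and differentiate under the integral sign in $s$. Since $h = e^{-\varphi}$ gives $\partial_2 h = -(\partial_2\varphi)\,h$, one finds $D'(s)/D(s) = -\,\EE_{\mu_{1,s}}[\partial_2\varphi(\cdot,s-\cdot)]$, hence $\partial_s \log f_1(u;s) = \EE_{\mu_{1,s}}[\partial_2\varphi(\cdot,s-\cdot)] - \partial_2\varphi(u,s-u)$. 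Writing $b_s(u) := \partial_2\varphi(u,s-u)$, this yields
\[
\partial_2 S_X(x;s) = \int_x^\infty \big(\EE_{\mu_{1,s}}[b_s] - b_s(u)\big) f_1(u;s)\,du = -\,\cov_{\mu_{1,s}}\!\big(\mathbf 1_{(x,\infty)},\,b_s\big).
\]
Hypothesis D1 is precisely what legitimizes all the interchanges: it yields an $L_1(\leb)$ envelope for $(\partial_2\varphi)\,h$ on the strip $s \in S_\epsilon$ (so $D$ is differentiable there and differentiation under the integral is valid), it supplies the absolute continuity of $u \mapsto b_s(u)$, and it forces $b_{s_0} \in L_1(\mu_{1,s_0})$, which is the integrability hypothesis of Corollary~\ref{cor:IndicatorAFcn}.

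Applying the absolutely continuous form (\ref{Indicator-MRLCovIdent-2-AbsC}) of Corollary~\ref{cor:IndicatorAFcn} to $\mu = \mu_{1,s_0}$ and $b = b_{s_0}$ gives $\cov_{\mu_{1,s_0}}(\mathbf 1_{(x,\infty)}, b_{s_0}) = \int_\RR K_{1,0}(x,x')\, b_{s_0}'(x')\,dx'$, and the chain rule gives $b_{s_0}'(x') = (\partial_{12}^2\varphi - \partial_{22}^2\varphi)(x', s_0-x')$; substituting produces (\ref{DerivSsubX}). Formula (\ref{DerivSsubY}) is the identical computation with the roles of $X$ and $Y$, and of D1 and D2, swapped. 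To pass from (\ref{DerivSsubY}) to (\ref{DerivSsubYForm2}) I would note that $X \sim \mu_{1,s_0}$ forces $s_0 - X \sim \mu_{2,s_0}$, so $F_{\mu_{2,s_0}}(y) = 1 - F_{\mu_{1,s_0}}(s_0 - y)$; a short manipulation based on $F(p\wedge q) + F(p\vee q) = F(p) + F(q)$ then gives $K_{2,0}(y',y) = K_{1,0}(s_0 - y', s_0 - y)$, and the substitution $x' = s_0 - y'$ together with the symmetry of $K_{1,0}$ yields the second form.

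For (\ref{CondDens1_SumOfDeriv})–(\ref{CondDens2_SumOfDeriv}) I would give a short symmetry argument and record a cross-check. On $\{X+Y = s\}$ one has $\{X > x\} = \{Y < s-x\}$, and since $h>0$ makes the conditional laws atomless, $S_X(x;s) = 1 - S_Y(s-x;s)$; differentiating in $s$ by the chain rule, using $\partial_1 S_Y(y;s) = -f_2(y;s)$ and the identity $f_1(x;s_0) = f_2(s_0-x;s_0)$ (both equal $h(x,s_0-x)/D(s_0)$), gives (\ref{CondDens1_SumOfDeriv}), and the symmetric computation gives (\ref{CondDens2_SumOfDeriv}). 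As an alternative (and consistency check) one adds (\ref{DerivSsubX}) to (\ref{DerivSsubYForm2}) evaluated at $y = s_0 - x$: the kernels both equal $K_{1,0}(x,x')$ and the two Hessian combinations add to $(\partial_{11}^2 - 2\partial_{12}^2 + \partial_{22}^2)\varphi(x',s_0-x') = \varphi_1''(x';s_0)$, the second derivative of the conditional potential of $X$ at $s_0$, so that Corollary~\ref{cor:KernelRecoveryOfDensity}(b), i.e.\ (\ref{LogConcaveTypeIdentitySecondDerivativeForm}), applied to $\mu_{1,s_0}$ returns $f_1(x;s_0)$.

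The main obstacle is the first step: making the differentiation under the integral sign fully rigorous under D1/D2, i.e.\ deducing from the envelope bound a local-in-$s$ domination of $\partial_s f_1(u;s)$ on $S_\epsilon$ and of the integrand defining $D'$, and using the stated absolute continuity to write $b_s$ as the indefinite integral of $b_s'$ so that the absolutely continuous branch of Corollary~\ref{cor:IndicatorAFcn} is available. Once these measure-theoretic points are in place, everything else is elementary bookkeeping with the kernels $K_{1,0}$ and $K_{2,0}$.
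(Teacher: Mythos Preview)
Your proposal is correct and follows essentially the same route as the paper for (\ref{DerivSsubX})--(\ref{DerivSsubYForm2}): differentiate under the integral using the D1/D2 envelopes, recognize the result as a conditional covariance with an indicator, apply Corollary~\ref{cor:IndicatorAFcn} in its absolutely continuous form, and pass to (\ref{DerivSsubYForm2}) via the kernel relation $K_{2,0}(y,y')=K_{1,0}(s_0-y,s_0-y')$. For (\ref{CondDens1_SumOfDeriv})--(\ref{CondDens2_SumOfDeriv}) the paper uses only what you call the ``cross-check'' (adding (\ref{DerivSsubX}) and (\ref{DerivSsubYForm2}) and invoking Corollary~\ref{cor:KernelRecoveryOfDensity}(b)); your primary chain-rule argument from $S_X(x;s)=1-S_Y(s-x;s)$ is a valid and slightly more direct alternative.
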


\medskip

\begin{cor}  \label{cor_potential_conditions}
If  D1 and D2 hold (so the conclusions of Theorem~\ref{Theorem_rep_cond_quantiles-ver2} hold), and 
\begin{eqnarray*}
&& (\partial_{22}^2 \varphi - \partial_{12}^2 \varphi ) (x', s_0 -x' ) \ge 0 \ \ \mbox{for all} \ \ x' \text{ such that }(x', s_0 -x' )\in S \ \ \mbox{and} \\
&&  (\partial_{11}^2 \varphi - \partial_{21}^2 \varphi ) (s_0 -y' ,y') \ge 0 \ \ \mbox{for all} \ \ y' \text{ such that }(s_0 -y' ,y')\in S,   
\end{eqnarray*}
then the conditional survival functions $S_X (\cdot | s)= S_X (\cdot ; s)$ and $S_Y (\cdot | s) = S_Y (\cdot ; s)$ in (ii) of
Proposition~\ref{prop_equi_dim_2}  
 are non-decreasing in $s$, and hence (i) of Proposition~\ref{prop_equi_dim_2}
also holds.  
\end{cor}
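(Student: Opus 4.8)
The plan is to read off the sign of the derivatives $\partial_2 S_X$ and $\partial_2 S_Y$ directly from the kernel representations supplied by Theorem~\ref{Theorem_rep_cond_quantiles-ver2}, the only analytic input being that the kernels in question are pointwise non-negative. First I would record that for any probability measure $\nu$ on $\RR$ with distribution function $F_\nu$, the kernel $K_\nu(x,x') = F_\nu(x\wedge x') - F_\nu(x) F_\nu(x')$ satisfies $K_\nu \ge 0$ on $\RR^2$: for $x \le x'$ one has $K_\nu(x,x') = F_\nu(x)\,(1 - F_\nu(x')) \ge 0$, and the case $x \ge x'$ follows by the symmetry of $K_\nu$. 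This is already noted in Corollary~\ref{cor:Shorack-Conseq}; in particular the conditional kernels $K_{1,0} = K_{\mu_{1,s_0}}$ and $K_{2,0} = K_{\mu_{2,s_0}}$ are non-negative everywhere.

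Given this, I would argue as follows. Since D1 holds, \eqref{DerivSsubX} gives, for each $x$,
\[
\partial_2 S_X(x,s_0) = \int_{\RR} K_{1,0}(x,x')\,(\partial_{22}^2 \varphi - \partial_{12}^2 \varphi)(x', s_0 - x')\, dx' .
\]
The integrand vanishes outside $\{x' : (x', s_0-x') \in S\}$ (this is where $\varphi = -\log h$ and hence its Hessian is defined, and also where the conditional density $f_1(\cdot;s_0)$ is supported, so $K_{1,0}(x,\cdot)$ is concentrated there), and on that set it is the product of the non-negative factor $K_{1,0}(x,x')$ with the factor $(\partial_{22}^2\varphi - \partial_{12}^2\varphi)(x', s_0-x')$, which is $\ge 0$ by the first hypothesis of the corollary. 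Hence $\partial_2 S_X(x,s_0) \ge 0$. Likewise, D2 together with \eqref{DerivSsubY} yields
\[
\partial_2 S_Y(y;s_0) = \int_{\RR} K_{2,0}(y',y)\,(\partial_{11}^2\varphi - \partial_{21}^2\varphi)(s_0-y',y')\, dy' \ge 0 ,
\]
using $K_{2,0} \ge 0$ and the second hypothesis (equivalently one may start from \eqref{DerivSsubYForm2} and use $K_{1,0}\ge 0$ after the substitution $y' \mapsto s_0 - y'$).

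It remains to turn these derivative bounds into monotonicity and then invoke Proposition~\ref{prop_equi_dim_2}. The domination hypotheses D1 and D2, with their neighbourhoods $S_\epsilon$ and $L_1$-dominating functions, are precisely what make $s \mapsto S_X(x;s)$ and $s \mapsto S_Y(y;s)$ locally absolutely continuous with the derivatives displayed in Theorem~\ref{Theorem_rep_cond_quantiles-ver2}; applying the sign computation above at every $s_0$ in the $s$-interval under consideration, one concludes that $s\mapsto S_X(x;s)$ and $s\mapsto S_Y(y;s)$ are non-decreasing for each fixed $x$, resp.\ $y$. This is exactly statement \textbf{(ii)} of Proposition~\ref{prop_equi_dim_2}, and the implication \textbf{(ii)} $\Rightarrow$ \textbf{(i)} of that proposition then gives that $I(s) = \EE[\Psi(X,Y)\mid X+Y=s]$ is non-decreasing for every $\Psi$ nondecreasing in each argument. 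The only real care needed — and the mild obstacle — is this last upgrade: one must ensure the sign conditions and D1/D2 are posited uniformly over the relevant $s$-interval rather than at an isolated $s_0$, so that the non-negative derivative, together with the local absolute continuity furnished by D1/D2, genuinely certifies a non-decreasing function rather than just a one-sided bound at a single point.
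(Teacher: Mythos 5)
Your argument is correct and is essentially the one the paper intends: the corollary is an immediate consequence of the kernel representations \eqref{DerivSsubX}--\eqref{DerivSsubYForm2} together with the pointwise non-negativity of $K_{\mu}$ noted at \eqref{def_kernel}, giving $\partial_2 S_X\ge 0$ and $\partial_2 S_Y\ge 0$ and hence \textbf{(ii)} and then \textbf{(i)} of Proposition~\ref{prop_equi_dim_2}. Your closing caveat --- that the sign conditions and D1/D2 must hold for all $s_0$ in the relevant interval, not just one point, to upgrade the pointwise derivative bound to genuine monotonicity --- is a fair and correct reading of how the corollary is meant to be applied.
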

\medskip

\begin{proof}
Let $J_1 (s) \equiv \log \left ( \int_{\RR} h(x,s-x) dx \right )$.  By the domination assumption D1, the function 
$J_1 $ is differentiable on $S_{\epsilon}$ with derivative
\begin{eqnarray*}
J_1' (s) = - \int_{\RR}  \partial_2 \varphi (x, s-x) h(x,s-x) dx \bigg /  \int_{\RR} h(x', s-x') dx' .
\end{eqnarray*}
Note that $\varphi_1 (x; s) = \varphi (x,s-x) + J_1 (s) $, and we therefore find that
\begin{eqnarray*}
\partial_2 \varphi_1 (x;s) = \partial_2 \varphi (x,s-x) + J_1^{\prime} (s),
\end{eqnarray*}
and 
\begin{eqnarray}
\partial_{12}^2 \varphi_1 (x; s) = ( \partial_{12}^2 \varphi- \partial_{22}^2\varphi ) ( x,s-x) .
\label{VarPhiOneDerivFormula}
\end{eqnarray}
Multiplying by minus one and integrating this identity with respect to $K_{\mu_1} (x,x')$ and then 
applying covariance identity (\ref{Indicator-MRLCovIdent-1-AbsC}) 
yields
\begin{eqnarray}
\lefteqn{\int_{\RR} ( \partial_{22}^2 \varphi - \partial_{12}^2  \varphi )(x', s-x') K_{\mu_1} (x,x') dx' } \nonumber \\
& = & - \int_{\RR} K_{\mu_1} ( x,x') \partial_{12}^2 \varphi_1 (x';s) dx'   \qquad \ \ \mbox{by (\ref{VarPhiOneDerivFormula})}  \nonumber \\
& = & - \left ( \int_{(-\infty,x]} f_1 (x'; s) dx' \int_{\RR} \partial_2 \varphi_1 (x';s) f_1(x';s) dx'  
                - \int_{(-\infty,x]} \partial_2 \varphi_1 (x'; s) f_1 (x';s) dx' \right ) 
   \label{IntermediateIdentityMuOne}
\end{eqnarray}
where the first term is 
\begin{eqnarray*}
\int_{\RR} \partial_2 \varphi_1 (x'; s_0) f_1 (x'; s_0) dx' 
& = & \int_{\RR} \partial_2 \varphi_1 (x';s_0) \exp (- \varphi_1 (x';s_0)) dx' \\
& = & - \frac{d}{ds} \int_{\RR} \exp (- \varphi_1 (x'; s) )dx' \bigg |_{s = s_0} = 0 ,
\end{eqnarray*}
and where the second term is
\begin{eqnarray*}
\int_{(-\infty, x]} \partial_2 \varphi_1 (x'; s_0) f_1 (x'; s_0) dx' 
& = &   \int_{(-\infty,x]} \partial \varphi_1 (x';s_0) \exp (- \varphi_1 (x';s_0)) dx'  \bigg |_{s = s_0} \\
& = & - \frac{\partial}{\partial s} \left (\mu_2 (-\infty,x] \right )(s_0) = \frac{\partial}{\partial s} S_X (x; s_0) .
\end{eqnarray*}
Combining this with (\ref{IntermediateIdentityMuOne})  evaluted at $s= s_0$ 
yields the claimed identity (\ref{DerivSsubX}).  

The identity (\ref{DerivSsubY}) follows from the same argument used to prove (\ref{DerivSsubX}) by  symmetry.  
To prove (\ref{DerivSsubYForm2}), let $F_{1,s}$ and $F_{2,s}$ denote the conditional distribution functions 
corresponding to the conditional densities $f_1 ( \cdot ; s)$ and $f_2 (\cdot ; s)$.  Then $F_{1,s} (x) = 1-F_{2,s} (s-x)$ 
and hence with $K_{j,s} (x,y) = F_{j,s} (x \wedge y) - F_{j,s} (x) F_{j,s} (y)$, $j=1,2$, it follows that 
\begin{eqnarray}
K_{2,s} (y, y') = K_{1,s} (s-y, s-y')   \ \ \mbox{for all} \ \ y, y' . 
\label{ConditionalKernelIdentity}
\end{eqnarray}
Then, evaluating at $s=s_0$, 
\begin{eqnarray*}
\lefteqn{\int_{\RR} K_{2,0} (y' , y) ( \partial_{11}^2 \varphi - \partial_{21}^2 \varphi )(s_0-y' , y' ) dy' } \\
& = & \int_{\RR} K_{2,0} (y , y') ( \partial_{11}^2 \varphi - \partial_{21}^2 \varphi )(s_0-y' , y' ) dy'  \ \ \ \mbox{since} \ \ K_{2,0} (y,y') = K_{2,0} (y',y) \\
& = &  \int_{\RR} K_{1,0} (s-y ,s- y') ( \partial_{11}^2 \varphi - \partial_{21}^2 \varphi )(s_0-y' , y' ) dy'  \ \ \ \mbox{by} \ \ (\ref{ConditionalKernelIdentity}) \\
& = & \int_{\RR} K_{1,0} (s-y , v) ( \partial_{11}^2 \varphi - \partial_{21}^2 \varphi )(v , s_0 -v ) dv \\
&& \qquad \qquad \ \ \mbox{by the change of variable} \ \ y' = s_0 -v .
\end{eqnarray*}
Thus (\ref{DerivSsubYForm2}) holds.   

To show that (\ref{CondDens1_SumOfDeriv}) holds, note that since
$\varphi_1 (x; s) = \varphi (x,s-x) + J_1 (s)$,
\begin{eqnarray*}
\partial_1 \varphi_1 (x; s) & = & \partial_1 \varphi (x,s-x) - \partial_2 \varphi (x,s-x) ,  \ \ \ \mbox{and} \\
\partial_{11} \varphi_1 (x; s) & = & \partial_{11}^2 \varphi (x,s-x) - \partial_{21}^2 \varphi (x,s-x) + \partial_{12}^2 \varphi_{22} (x,s-x) .
\end{eqnarray*}
But then
\begin{eqnarray*}
f_1 (x; s_0) 
& = & \int_{\RR} K_{1,0} (x, x') \partial_{11}^2 \varphi_1 (x'; s_0) dx' \ \ \ \mbox{by (\ref{LogConcaveTypeIdentitySecondDerivativeForm}), 
          \ Corollary~\ref{cor:KernelRecoveryOfDensity} } \\
& = &  \int_{\RR} K_{1,0} (x, x')  
           \left \{ \partial_{11}^2 \varphi (x',s_0-x') - \partial_{21}^2 \varphi (x',s_0-x') + \partial_{12}^2 \varphi_{22} (x',s_0-x') \right \} d x' \\
& = & \partial_2 S_X (x;s_0) + \partial_2 S_Y (y;s_0)
\end{eqnarray*}
by using (\ref{DerivSsubX}) and (\ref{DerivSsubYForm2}) in the last equality.
\end{proof}
\bigskip

Now we are ready discuss examples (and counter-examples) of joint distributions on 
$\mathbb{R}^{2}$ where \textbf{(ii)} of  Proposition~\ref{prop_equi_dim_2}  is satisfied. 
Identities (\ref{DerivSsubX}) - (\ref{DerivSsubYForm2}) 
in Theorem \ref{Theorem_rep_cond_quantiles-ver2}
are very useful in this regard.

But we first consider the log-concave case in the light of Theorem \ref{Theorem_rep_cond_quantiles-ver2}.

\subsection{Independent log-concave variables revisited}
\label{section_indep_log_concav}

The following theorem is due to \cite{MR0171335}. We give a different proof
than Efron's, based on formulas (\ref{DerivSsubX}) 
and (\ref{DerivSsubY}) 
of Theorem \ref{Theorem_rep_cond_quantiles-ver2} above.

\begin{thm}[\protect\cite{MR0171335}]
\label{Theorem_Efron_68}
The monotonicity property is satisfied for any pair
of independent log-concave random variables.
\end{thm}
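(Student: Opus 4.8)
The strategy is to verify the hypotheses of Corollary~\ref{cor_potential_conditions} for the case $h(x,y) = f(x) g(y)$ with $f = \exp(-\varphi_X)$ and $g = \exp(-\varphi_Y)$ both log-concave. First I would write $\varphi(x,y) = -\log h(x,y) = \varphi_X(x) + \varphi_Y(y)$, so that the Hessian of $\varphi$ is diagonal: $\partial_{12}^2 \varphi = \partial_{21}^2 \varphi = 0$, $\partial_{11}^2 \varphi(x,y) = \varphi_X''(x)$, and $\partial_{22}^2 \varphi(x,y) = \varphi_Y''(y)$. Log-concavity of $f$ and $g$ gives $\varphi_X'' \ge 0$ and $\varphi_Y'' \ge 0$ wherever these second derivatives exist, hence
\begin{eqnarray*}
(\partial_{22}^2 \varphi - \partial_{12}^2 \varphi)(x', s_0 - x') = \varphi_Y''(s_0 - x') \ge 0, \qquad
(\partial_{11}^2 \varphi - \partial_{21}^2 \varphi)(s_0 - y', y') = \varphi_X''(s_0 - y') \ge 0,
\end{eqnarray*}
which are exactly the sign conditions required in Corollary~\ref{cor_potential_conditions}. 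Granting the domination hypotheses D1 and D2, the corollary then yields that $s \mapsto S_X(x;s)$ and $s \mapsto S_Y(y;s)$ are non-decreasing for every fixed $x, y$, which is condition \textbf{(ii)} of Proposition~\ref{prop_equi_dim_2}, and hence condition \textbf{(i)} — the monotonicity property — follows.

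The remaining work is to check D1 and D2, and to address the fact that a general log-concave density need not be twice differentiable nor even $C^1$. For D1, note that in the independent case $\partial_2 \varphi(x, s-x) = \varphi_Y'(s-x)$, so the dominating integrand is $|\varphi_Y'(s-x)| f(x) g(s-x)$. Since a log-concave $g$ satisfies $g' = -\varphi_Y' g$ with $g' \in L_1(\mathrm{Leb})$ (this is the integrability remark following Corollary~\ref{cor:KernelRecoveryOfDensity}), one has $|\varphi_Y'(t)| g(t) = |g'(t)|$, and because $g$ is log-concave its derivative $g'$ is monotone on each side of the mode, so $|g'|$ is bounded by an integrable envelope uniformly for $t = s-x$ ranging over a compact $s$-interval; multiplying by the bounded factor $f(x)$ and absorbing the translation into $\overline{g}(x) \equiv f(x)\,(\sup_{|u|\le \epsilon} |g'|)(\,\cdot - u)$-type bound gives the required $L_1$ domination. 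D2 is symmetric.

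To handle the regularity issue, I would first prove the theorem assuming $f$ and $g$ are additionally smooth (say $C^2$ with $\varphi_X', \varphi_Y'$ absolutely continuous), so that Corollary~\ref{cor_potential_conditions} applies verbatim, and then pass to general log-concave $f, g$ by approximation: convolve with a small Gaussian (or mollify), which preserves log-concavity by Prékopa's theorem, apply the smooth case, and take limits in the monotonicity of the conditional survival functions — monotonicity being a closed property under pointwise limits. I expect the approximation/regularization step to be the main obstacle, since one must argue that the conditional survival functions of the mollified pair converge (at least along a subsequence, at continuity points) to those of $(X,Y)$, and that the conditioning on $X+Y = s$ behaves well under the perturbation; this is where the continuity-of-densities hypotheses and a Scheffé-type argument on the conditional densities $f_1(\cdot;s), f_2(\cdot;s)$ from \eqref{CondDensOne}--\eqref{CondDensTwo} come in. Alternatively, one can invoke Remark~\ref{remark_1} to reduce \textbf{(i)} to the one-variable functionals in \eqref{One-variable_cond_means}, which may make the limiting argument cleaner.
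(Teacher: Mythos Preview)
Your proposal is correct and follows essentially the same route as the paper: both compute the diagonal Hessian of $\varphi = \varphi_X + \varphi_Y$, invoke the kernel representation of $\partial_2 S_X$ and $\partial_2 S_Y$ via Theorem~\ref{Theorem_rep_cond_quantiles-ver2} (equivalently Corollary~\ref{cor_potential_conditions}) to obtain nonnegativity from $\varphi_X'', \varphi_Y'' \ge 0$, and then remove the smoothness hypotheses by Gaussian convolution and a pointwise limit in the conditional survival functions. The paper streamlines your D1 verification by simply assuming $g_X', g_Y' \in L_\infty$ at the smooth stage (automatic after Gaussian mollification), which is cleaner than the integrable-envelope argument you sketch; otherwise the arguments coincide.
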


\begin{proof}
Let $\left( X,Y\right) $ be a pair of independent log-concave random
variables with density $h$ on $\mathbb{R}^{2}$ with respect to Lebesgue measure. 
Then
\begin{equation*}
h\left( x,y\right) 
=g_{X}\left( x\right) g_{Y}\left( y\right) =\exp \left(
-\left( \varphi _{X}\left( x\right) +\varphi _{Y}\left( y\right) \right)
\right) ,\text{ \ \ }\left( x,y\right) \in \mathbb{R}^{2},
\end{equation*}%
where $g_{X}$ and $g_{Y}$ are the densities ($\varphi _{X}$ and $\varphi
_{Y} $ are the convex potentials) of $X$ and $Y$ respectively. Indeed, a
log-concave random variable on $\mathbb{R}$ automatically has a density with
respect to the Lebesgue measure (see for instance \cite{SauWel2014}). 
For now, let us also assume that $h>0$ on $\mathbb{R}^{2}$. \\
Denote also 
$\varphi \left( x,y\right) =\varphi_{X}\left( x\right) +\varphi _{Y}\left( y\right) $,
$\left( x,y\right) \in \mathbb{R}^{2}$. 
Let us first assume that $\varphi _{X}$ and 
$\varphi _{Y}$ are $C^{2}$ and that $g_{X}^{\prime },g_{Y}^{\prime }\in L_{\infty }$.
Define measures $\mu _{1}$ by $d\mu _{1}\left( x\right) =f_1\left( x;s\right)dx$ where 
\begin{equation*}
f_1 \left( x; s\right) 
=\exp \left( -\varphi _{1}\left( x;s\right) \right) 
= \frac{h\left( x,s-x\right) }{\int_{\mathbb{R}}h\left( x^{\prime},s-x^{\prime }\right) dx^{\prime }}\text{.}
\end{equation*}
Then, it follows that 
\begin{equation*}
\varphi _{1}\left( x;s\right) 
=\varphi _{X}\left( x\right) +\varphi_{Y}\left( s-x\right) + \log  \left( \int_{\mathbb{R}}
   \exp \left( -\left( \varphi _{X}\left( x^{\prime }\right) 
   +\varphi _{Y}\left( s-x^{\prime}\right) \right) \right) dx^{\prime }\right) \text{ .}
\end{equation*}%
Using the assumption that that 
$g_{Y}^{\prime }=-\varphi _{Y}^{\prime }\exp \left( -\varphi _{Y}\right) $ 
is uniformly bounded, it is easy to see that 
$\partial _{2} \varphi _{1}$ exists and that 
\begin{equation*}
\partial _{2}\varphi _{1}\left( x; s\right) =\varphi _{Y}^{\prime }\left(
s-x\right) +\frac{\int_{\mathbb{R}}\varphi _{Y}^{\prime }\left( s-x^{\prime
}\right) \exp \left( -\left( \varphi _{X}\left( x^{\prime }\right) 
+\varphi_{Y}\left( s-x^{\prime }\right) \right) \right) dx^{\prime }}
           {\int_{\mathbb{R}}h\left( x^{\prime },s-x^{\prime }\right) dx^{\prime }}~\text{.}
\end{equation*}
Again using  $g_{Y}^{\prime }\in L_{\infty }$, simple calculations show
that there exists a function $\overline{g}\in L_{1}\left( \leb \right) $ 
such that for almost all $x\in \mathbb{R}$, 
\begin{equation*}
\left\vert \partial _{2}\varphi _{1}\left( x;s\right) \exp \left( -\varphi_{1}\left( x;s\right) \right) \right\vert \leq  \overline{g} \left( x\right) \text{.}
\end{equation*}%
Furthermore, $\partial _{2}\varphi _{1}\left( \cdot ;s\right) $ is
absolutely continuous (even $C^{1}$), so by formula (\ref{DerivSsubX}), 
it follows that
\begin{eqnarray*}
\left( \partial _{2}S_{X}\right) \left( x;s\right) 
&=&\int_{\mathbb{R}}K_{\mu _{1}}\left( x,u\right) 
        \left( \partial _{22}^{2}\varphi -\partial_{12}^{2}\varphi \right) \left( u,s-u\right) du \\
&=&\int_{\mathbb{R}}K_{\mu _{1}}\left( x,u\right) \varphi _{Y}^{\prime \prime }\left( s-u\right) du\text{ .}
\end{eqnarray*}
Since $\varphi_Y^{\prime \prime} \ge 0$ by log-concavity of $g_Y$,
it follows that 
$\left( \partial _{2}S_{X}\right) \left( x;s\right) \geq 0$. 
Note that the argument shows that even if $g_X$ is is not log-concave, log-concavity of $g_Y$
implies monotonicity of $s \mapsto S_X(x;s)$.

By symmetry between $X$ and $Y$, we also have $\left( \partial _{2}S_{Y}\right)
\left( y;s\right) \geq 0$ and we conclude from Proposition \ref{prop_equi_dim_2} 
that the monotonicity property is satisfied for $\left(X,Y\right) $.

To conclude, we have to prove that we can reduce the situation from general
convex potentials to potentials $\varphi $ that are finite on $\RR$ (this implies that $h>0$ on $\RR$), that are $C^{2}$ and that satisfy 
$\left\Vert \varphi ^{\prime }\exp \left( -\varphi \right) \right\Vert
_{\infty }<+\infty $. This is done by convolution with Gaussian random
variables, whose variance tends to zero (see for instance Proposition 5.5 in 
\cite{SauWel2014}). In particular, we see that any pair of independent
log-concave random variables $\left( X,Y\right) $ there exists a sequence of
log-concave random variables $\left( X_{n},Y_{n}\right) $, with $X_{n}$
independent of $Y_{n}$, such that the densities $g_{X_{n}}$ and $g_{Y_{n}}$
of $X_{n}$ and $Y_{n}$ are $C^{\infty }$ and converge respectively to $g_{X}$
and $g_{Y}$ in $L_{\infty }$. Hence, for any $\left( x,y,s\right) \in 
\mathbb{R}^{3}$,%
\begin{equation*}
S_{X_{n}}(x; s) \rightarrow S_{X} (x; s) 
\text{ \ \ and \ \ }
S_{Y_{n}}(y; s) \rightarrow S_{Y}(y; s) \text{ ,} \ \mbox{as} \ {n\rightarrow \infty }
\end{equation*}
which gives the monotonicity in $s$ of $S_{X}(x; s) $ and $S_{Y}(y; s) $.
\end{proof}

The monotonicity extends to more than two independent log-concave variables.

\begin{thm}[\protect\cite{MR0171335}]
\label{theorem_Efron}Let $m$ be greater than one. Then the monotonicity
property is satisfied for any $m$-tuple of independent log-concave variables.
\end{thm}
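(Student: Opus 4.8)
The plan is to reduce the $m$-tuple case to the two-variable case by a conditioning argument, exactly as Efron did, and then invoke Theorem~\ref{Theorem_Efron_68}. Write $X_1, \dots, X_m$ for the independent log-concave variables and set $S = X_1 + \cdots + X_m$. The key structural fact I would exploit is that log-concavity is preserved under convolution in one dimension (a classical consequence of Pr\'ekopa's theorem, cited in Section~\ref{section:intro}), so that any partial sum such as $T_k \equiv X_{k+1} + \cdots + X_m$ is again log-concave and independent of $X_1, \dots, X_k$.

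First I would show it suffices to prove that $s \mapsto \EE[\Psi(X_1, \dots, X_m) \mid S = s]$ is nondecreasing for $\Psi$ nondecreasing in each argument; by Remark~\ref{remark_1} (the natural $m$-dimensional analogue of Proposition~\ref{prop_equi_dim_2}) one may even restrict to $\Psi$ depending on a single coordinate, say $\Psi(x_1,\dots,x_m) = \psi(x_1)$. Second, I would peel off one coordinate at a time. Condition first on the pair $(X_1, T_1)$ where $T_1 = X_2 + \cdots + X_m$: since $X_1$ and $T_1$ are independent and log-concave, Theorem~\ref{Theorem_Efron_68} gives that $s \mapsto \EE[\psi(X_1) \mid X_1 + T_1 = s] = \EE[\psi(X_1) \mid S = s]$ is nondecreasing. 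This already handles the one-coordinate case, which by Remark~\ref{remark_1} is enough. For a fully general $\Psi$ one iterates: write $\EE[\Psi \mid S=s]$ as a conditional expectation given $(X_1 + T_1 = s)$, decompose via the tower property over the value of $X_1$ along the conditional quantile coupling (\ref{quantile_transf}), and recurse on $T_1 = X_2 + \cdots + X_m$, which is a sum of $m-1$ independent log-concave variables; induction on $m$ closes the loop.

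The step I expect to require the most care is making the reduction via Remark~\ref{remark_1} fully rigorous in the multivariate monotone-coupling step: one needs that conditionally on $S = s$ the coordinates $(X_1, \dots, X_m)$ can be realized monotonically (coordinatewise nondecreasing in a single uniform parameter) so that monotonicity of each marginal conditional survival function transfers to monotonicity of $\EE[\Psi \mid S=s]$ for arbitrary coordinatewise-nondecreasing $\Psi$. Efron handles this; alternatively, the shortcut through Remark~\ref{remark_1} — reducing to $\Psi$ depending on one coordinate — sidesteps the multivariate coupling entirely and makes the induction almost immediate. The technical regularity issues (existence of conditional densities, absolute continuity, the domination conditions D1--D2) are handled just as in the proof of Theorem~\ref{Theorem_Efron_68}, by a Gaussian-convolution smoothing argument followed by passage to the limit, so I would simply remark that the same reduction applies verbatim with $h$ now the joint density of $(X_1, T_1)$.
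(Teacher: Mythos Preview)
Your primary route has a gap. You invoke an ``$m$-dimensional analogue of Proposition~\ref{prop_equi_dim_2}/Remark~\ref{remark_1}'' to reduce to $\Psi$ depending on a single coordinate, but this analogue is not available for free. For $m=2$ the implication (ii)$\Rightarrow$(i) works because the conditional law given $X+Y=s$ is supported on a line, so $Y=s-X$ and coupling $X$ monotonically via its conditional quantile automatically couples $Y$. For $m\ge 3$ the conditional law lives on an $(m{-}1)$-dimensional hyperplane, and marginal stochastic monotonicity of each $X_i$ given $S=s$ does not by itself yield a coordinatewise-monotone coupling of the full vector. You correctly flag this coupling as the delicate point, but then claim Remark~\ref{remark_1} ``sidesteps'' it --- it does not: Remark~\ref{remark_1} is merely a restatement of condition (ii), and passing from (ii) to (i) is exactly where the coupling is needed.

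The paper avoids the issue by never reducing to single-coordinate $\Psi$. It sets $T=\sum_{i=1}^{m-1}X_i$ and defines the two-variable function
\[
\Lambda(t,u)=\mathbb{E}\bigl[\Psi(X_1,\dots,X_m)\,\big|\,T=t,\ X_m=u\bigr].
\]
Then $\Lambda$ is nondecreasing in $u$ trivially, and nondecreasing in $t$ by the induction hypothesis at rank $m-1$ applied to $X_1,\dots,X_{m-1}$ with $X_m=u$ frozen. Since $T$ is log-concave (preservation under convolution) and independent of $X_m$, the rank-$2$ case applied to the pair $(T,X_m)$ and the coordinatewise-nondecreasing function $\Lambda$ finishes the proof. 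Your loosely sketched alternative via the tower property and recursion is this argument in disguise; the concrete object you are missing is $\Lambda$, and the place where the induction hypothesis actually enters is in establishing its monotonicity in $t$.
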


\begin{proof}
We proceed as in \cite{MR0171335} by induction on $m$. Let 
$\left(X_{1},\ldots ,X_{m}\right) $ be an $m-$tuple of log-concave variables, let
$S=\sum_{i=1}^{m}X_{i}$ be their sum, and set 
\begin{equation*}
\Lambda \left( t,u\right) 
=\mathbb{E}\left[ \Psi \left( X_{1},\ldots,X_{m}\right) \left\vert \sum_{i=1}^{m-1}X_{i}=t\text{ },\text{ }%
X_{m}=u\right. \right]  .
\end{equation*}
Then 
\begin{equation*}
\mathbb{E}\left[ \Psi \left( X_{1},\ldots ,X_{m}\right) \left\vert S=s\right. \right] 
=\mathbb{E}\left[ \Lambda \left( T,X_{m}\right)
\left\vert T+X_{m}=s\right. \right] \text{ ,}
\end{equation*}
where $T=\sum_{i=1}^{m-1}X_{i}$. The variable $T$ has a log-concave density
(by preservation of log-concavity by convolution). Hence, by the induction
hypothesis at rank $2$, it suffices to prove that $\Lambda $ is
coordinatewise non-decreasing. $\Lambda \left( t,u\right) $ is
non-decreasing in $t$ by the induction hypothesis at rank $m-1$. Also 
$\Lambda \left( t,u\right ) $ is non-decreasing in $u$ since $\Psi $ is
non-decreasing in its last argument. This concludes the proof.
\end{proof}

\subsection{Examples\label{ssection_gener_Efron_examples}}

\subsubsection{Bivariate Gaussian}

This special case, in which the joint density is log-concave but not independent, 
 is simple but instructive. Suppose that $(X,Y) \sim N_2
(0, \Sigma)$ where 
\begin{equation*}
\Sigma = \left ( 
\begin{array}{cc}
\sigma^2 & \rho \sigma \tau \\ 
\rho \sigma \tau & \tau^2
\end{array}
\right ) .
\end{equation*}
Then 
\begin{equation*}
\varphi (x,y) = - \log \phi_{\Sigma} (x,y) = \frac{1}{2 (1-\rho^2)} \left ( 
\frac{x^2}{\sigma^2} - 2\rho \frac{x}{\sigma} \frac{y}{\tau} + \frac{y^2}{\tau^2} \right ) + \mbox{constant} ,
\end{equation*}
so that 
\begin{eqnarray*}
&& \frac{\partial}{\partial x} \varphi (x,y) = \frac{1}{\sigma^2 (1-\rho^2)}
\left ( x - \frac{\rho \sigma}{\tau} y \right ) , \\
&& \frac{\partial}{\partial y} \varphi (x,y) = \frac{1}{\tau^2 (1-\rho^2)}
\left ( y - \frac{\rho \tau}{\sigma} x \right ) ,
\end{eqnarray*}
and 
\begin{eqnarray*}
&& \partial_{11}^2 \varphi (x,y) = \frac{1}{\sigma^2 (1-\rho^2)}, \ \ \ \
\partial_{22}^2  \varphi (x,y) = \frac{1}{\tau^2 (1-\rho^2)}, \\
&& \partial_{12}^2 \varphi (x,y) = \partial_{21}^2 \varphi (x,y) = - \frac{\rho}{\sigma \tau (1-\rho^2)} .
\end{eqnarray*}
Thus we have 
\begin{eqnarray*}
&& \partial_{11}^2\varphi(x,y)- \partial_{21}^2 \varphi (x,y) = \frac{1}{\sigma
(1-\rho^2)} \left ( \frac{1}{\sigma} + \frac{\rho}{\tau} \right ) , \\
&& \partial_{22}^2\varphi(x,y)- \partial_{21}^2 \varphi (x,y) = \frac{1}{\tau
(1-\rho^2)} \left ( \frac{1}{\tau} + \frac{\rho}{\sigma} \right ) ,
\end{eqnarray*}
where the right hand sides of the last two displays are nonnegative if and
only if $-\rho \le (\tau/\sigma) \wedge (\sigma/ \tau)$ or, equivalently, if
and only if $\rho \ge - \{ (\tau/\sigma) \wedge (\sigma/ \tau)\}$.  It follows 
from Corollary~\ref{cor_potential_conditions} that if  
$\rho \ge - \{ (\tau/\sigma) \wedge (\sigma/ \tau)\}$, then 
$\mathbb{E}\left[\Psi \left( X,Y\right) \left\vert X+Y=z\right. \right]$
is a monotone function of $z$ for any function $\Psi$ which is monotone in each coordinate,
for example $\Psi_1 (x,y) \equiv 1 \{ x \ge 0, y \ge 0 \}$ or $\Psi_2 (x,y) \equiv ax + b y$ with $a, b \ge 0$.

In fact, in this example we have $(X | X+Y = z) \sim N(\mu z, A^2)$  and, by symmetry, 
$(Y | X+Y = z) \sim N(\nu z, A^2)$ where 
\begin{eqnarray*}
&& \frac{1}{A^2} = \frac{1}{1-\rho^2} \left \{ \frac{1}{\sigma^2} + \frac{2
\rho}{\sigma \tau} + \frac{1}{\tau^2} \right \} , \\
&& \mu = \frac{A^2}{1-\rho^2} \left ( 1 + \rho\frac{\tau}{\sigma} \right ) \frac{1}{\tau^2} , \\
&& \nu = \frac{A^2}{1-\rho^2} \left ( 1 + \rho\frac{\sigma}{\tau} \right ) \frac{1}{\sigma^2} = 1 - \mu .
\end{eqnarray*}
(Note that when $\sigma = \tau =1$ and $\rho = 0$ this yields $(X | X+Y = z)
\sim N( z/2, 1/2)$.)

Now we check the claimed monotonicity for the conditional expectations in the case of $\Psi_1 $ and $\Psi_2$.
For $\Psi_1$, with $\Phi (z) \equiv P(N(0,1)\leq z)$ on the right side, 
\begin{eqnarray*}
E\{\Psi_1 (X,Y)|X+Y=z\} 
&=&P(X\geq 0,z-X\geq 0|X+Y=z)=P_{z}(0\leq X\leq z) \\
&=&\{\Phi ((z-\mu z)/A)-\Phi (-\mu z/A)\}1\{z\geq 0\} \\
& = & \{ \Phi (\nu z /A) - \Phi (- \mu z/A)\} 1 \{ z \geq 0 \} .
\end{eqnarray*} 
This is a monotone function of $z$ if 
$\rho \geq -\{\sigma /\tau \wedge \tau /\sigma \}$. 
For $\Psi_2$ we have 
\begin{eqnarray*}
E\{\Psi_2 (X,Y)|X+Y=z \}  
& = & a E(X | X+Y = z)  + b E(Y | X+Y = z) = a \mu z + b \nu z \\
& = & (a \mu + b \nu) z =  \frac{A^2}{1-\rho^2} \left \{ \frac{a}{\tau^2} \left ( 1 + \frac{\rho \tau}{\sigma} \right )  
                + \frac{b}{\sigma^2} \left ( 1 + \frac{\rho \sigma}{\tau} \right ) \right \} z .
\end{eqnarray*}
This is monotone increasing for any $a, b \ge 0$ if and only if $ \rho \ge - \{ (\tau/\sigma) \wedge (\sigma/ \tau)\}$,
just as we concluded above via Corollary~\ref{cor_potential_conditions}.

\subsubsection{Morgenstern copula}  

(Not log-concave and not independent)
Suppose that $(X,Y)$ has density $c_{\theta}$ on $[0,1]^2$ where 
\begin{eqnarray*}
c_{\theta} (x,y) = 1+ \theta (1-2x)(1-2y), \ \ \ (x,y) \in [0,1]^2
\end{eqnarray*}
for $| \theta | \le 1$. Then straightforward calculation yields 
\begin{eqnarray*}
&& \partial_{11}^2  \varphi (x,y) - \partial_{21}^2 \varphi (x,y) = \frac{4
\theta (1 + \theta (1-2y)^2)}{[1 + \theta (1-2x)(1-2y)]^2}, \\
&& \partial_{22}^2 \varphi (x,y) - \partial_{12}^2 \varphi (x,y) = \frac{4
\theta (1 + \theta (1-2x)^2)}{[1 + \theta (1-2x)(1-2y)]^2}
\end{eqnarray*}
and the right sides in the last display are both non-negative if and only if 
$\theta\ge 0$. Hence for $\Psi$ coordinatewise monotone, $E \{\Psi (X,Y) |X+Y = z \}$ 
is monotone in $z$ if and only if $\theta \ge 0$.

\subsubsection{Frank copula}

(Not log-concave and not independent)  Suppose that $(X,Y)$ has distribution function $C_{\theta}$ on $[0,1]^2$
where 
\begin{eqnarray*}
C_{\theta} (x,y) = \left \{ 
\begin{array}{ll}
\log \left \{ 1 - \frac{(1-\theta^x) (1-\theta^y)}{(1-\theta)} \right \} /
\log \theta , & \theta \not= 1, \ (x,y) \in [0,1]^2 \\ 
xy, & \theta = 1, \ (x,y) \in [0,1]^2 .%
\end{array}
\right .
\end{eqnarray*}
for $0 < \theta < \infty $. Then straightforward calculation yields 
\begin{eqnarray*}
&& \partial_{11}^2 \varphi (x,y) - \partial_{21}^2 \varphi (x,y) = - \frac{2
\theta^x (\theta - 2\theta^y + \theta^{2y} ) (\log \theta )^2}{(\theta -
\theta^x - \theta^y + \theta^{x+y} )^2}, \\
&& \partial_{22}^2 \varphi (x,y) - \partial_{12}^2 \varphi (x,y) = - \frac{2
\theta^y (\theta - 2\theta^x + \theta^{2x} ) (\log \theta )^2}{(\theta -
\theta^x - \theta^y + \theta^{x+y} )^2}
\end{eqnarray*}
and the right sides in the last display are both non-negative if and only if 
$\theta \in (0,1]$.  
Hence for $\Psi $ coordinatewise monotone, $E \{ \Psi(X,Y) | X+Y = z \}$ 
is monotone in $z$ if and only if $0 < \theta \le 1$.

Note that $\theta=1$ corresponds to $(X,Y)$ being independent uniform
$(0,1)$ random variables, and we know that the conditional expectation is monotone 
by Efron's theorem in this case.  

\subsubsection{Clayton-Oakes copula}

(Not log-concave and not independent) 
Suppose that $(X,Y)$ has distribution function $C_{\theta }$ on $(0,1]^{2}$
where 
\begin{equation*}
C_{\theta }(x,y)=\left\{ x^{-\theta }+y^{-\theta }-1\right\} ^{-1/\theta },\
\ \ (x,y)\in (0,1]^{2}
\end{equation*}%
for $0<\theta <\infty $. Then straightforward calculation yields 
\begin{eqnarray*}
\lefteqn{\partial _{11}^2 \varphi (x,y)-\partial _{21}^2 \varphi (x,y)} \\
&=&\frac{\theta (1+2\theta )x^{1+\theta }y^{\theta }+\theta y^{1+2\theta
}-(1-\theta -2\theta ^{2})x^{\theta }y^{1+\theta }(1-y^{\theta })-(1-\theta
)x^{2\theta }y(1-y^{\theta })^{2}}{x^{2}y(y^{\theta }+x^{\theta
}(1-y^{\theta }))^{2}},
\end{eqnarray*}
\begin{eqnarray*}
\lefteqn{\partial _{22}^2 \varphi (x,y)-\partial _{12}^2 \varphi (x,y)} \\
&=&\frac{\theta (1+2\theta )y^{1+\theta }x^{\theta }+\theta x^{1+2\theta}
       -(1-\theta -2\theta ^{2})y^{\theta }x^{1+\theta }(1-x^{\theta })-(1+\theta )y^{2\theta }x(1-x^{\theta })^{2}}{y^{2}x(x^{\theta }
       +y^{\theta}(1-x^{\theta }))^{2}}.
\end{eqnarray*}
and the right sides in the last displays are both non-negative if $\theta \in
(1/2,1)$. Hence for $\Psi $ coordinatewise monotone, $E\{ \Psi (X,Y)|X+Y=z\}$
is monotone in $z$ if $1/2\leq \theta \leq 1$.

\subsection{Monotonicity preservation under measurement error}
\label{subsec:MonUnderMsmtError}

Now we discuss the statistical application described at the end of Section \ref{section:intro} above
 in light of Theorem~\ref{Theorem_rep_cond_quantiles-ver2}. 
Indeed, we are now able to 
extend the results of \cite{HwangStef:94} related to independent log-concave errors.
Briefly recall the framework:  
we are given a triple $(T,X,U)$ of random variables, and consider the monotonicity of 
$\EE[T|X=x]$ under the assumption that $\EE[T|U=u]$ is monotone. 
The variable $Z:=X-U$ is interpreted as a measurement error and is essential in the analysis. 
We assume in the sequel that the triple $(T,X, U)$ has a density 
$f_{T,U,X}$ with respect to Lebesgue measure on $\RR^{3}$, 
and that $T$ and $X$ are conditionally independent given $U$. 
Thus
\begin{equation*}
f_{T,U,X}(t,u,x)=f_{T|U}(t|u)f_{X|U}(x|u)f_U(u) .
\end{equation*}
This yields: 
\begin{equation}\label{forumla_meas_error}
\EE[T|X=x]=\int\EE[T|U=u]f_{U|X}(u|x)du.
\end{equation}%
Setting $\Psi(u)=\EE[T|U=u]$, formula (\ref{forumla_meas_error}) can be rewritten as follows:  
\begin{equation}\label{forumla_meas_error_2}
\EE[T|X=x]=\EE[\Psi(U)|U+Z=x].
\end{equation}%
If $\Psi$ is nondecreasing, we see by Remark \ref{remark_1} that monotonicity of 
$\EE[T|X=x]$ is thus ensured as soon the conditional quantiles 
\begin{equation*}
S_U(u,x)=\mathbb{P}\left[ U>u\left\vert U+Z=x\right. \right]
\end{equation*}%
are nondecreasing in $x$ for any $u\in\RR$. Now, setting 
$\varphi=-\log(f_{U,X})$ and using Corollary \ref{cor_potential_conditions}, 
we get that monotonicity of $\EE[T|X=x]$ is ensured if for every $s_{0}\in\RR$, 
Assumption {\bf D1} is valid for $\varphi$ and for all $x'\in\RR$,
\begin{equation}
(\partial_{22}^2 \varphi - \partial_{12}^2 \varphi ) (x', s_0 -x' ) \ge 0 .
\end{equation}
As seen in the examples above, this condition is valid for independent log-concave variables $U$ and $Z$, 
but also for possibly dependent variables which may or may not be log-concave. 
\section{Quantitative estimates in the monotonicity property}
\label{section:application_to_monotonicity}

In this section, we establish a quantitative strengthening of Efron's monotonicity property.
Recall that we are interested in the function $I$ of $s\in \mathbb{\mathbb{R}}$, given in (\ref{def_I}).
We thus consider a pair $\left( X,Y\right) $ of
random variables with density $h$ on $\mathbb{R}^{2}$ with respect to the
Lebesgue measure. By setting%
\begin{equation*}
S_{X}\left( x,s\right) =\mathbb{P}\left[ X>x\left\vert X+Y=s\right. \right] 
\text{ and }S_{Y}\left( y,s\right) =\mathbb{P}\left[ Y>y\left\vert
X+Y=s\right. \right] \text{ ,}
\end{equation*}%
we have seen in Section \ref{section:intro} that the function $I$ is
non-decreasing if for all $(x,y)\in \mathbb{R}^{2}$, $S_{X}\left( x,s\right) 
$ and $S_{Y}\left( y,s\right) $ are non-decreasing in $s\in \mathbb{R}$.

Note that if $h$ is positive and continuous on $\mathbb{R}^{2}$ then $\int_{%
\mathbb{R}}h\left( s-y^{\prime },y^{\prime }\right) dy^{\prime }>0$ and the
function $f_{2}$ given by (\ref{CondDensTwo})
is well-defined. In this case, we may write
\begin{equation}
I\left( s\right) =\int_{\mathbb{R}} \Psi \left( s-y,y\right) f_{2}\left( y;s\right) dy\text{ .}  \label{I_integral_form}
\end{equation}
By a change of variable, we may also write
\begin{equation*}
I\left( s\right) =\int_{\mathbb{R}} \Psi \left( x,s-x\right) f_{1}\left( x; s\right) dx\text{ ,}
\end{equation*}
where $f_1$ is given by (\ref{CondDensOne}).
We define the measure $\mu _{1}$ by $d\mu _{1}\left( x\right) =f_{1}\left(x;s\right) dx$.
\smallskip

\begin{thm}
\label{theorem_Efron_lower_bound}Assume that the statements of Propostion \ref{prop_equi_dim_2}, that is Efron's monotonicity property, hold. Let us take $s_{0}\in \mathbb{R}$ and 
$\varepsilon >0$, and let  $V\left( s_{0}\right) =\left[ s_{0}-\varepsilon,s_{0}+\varepsilon \right] $. 
With the notations above, assume that $h=\exp \left( -\varphi \right) $ is positive and coordinatewise 
differentiable on $\mathbb{R}^{2}$. 
Assume also that $\Psi $ is coordinatewise differentiable
on $\mathbb{R}^{2}$. Furthermore, assume that for any 
$s\in V\left( s_{0}\right) $, $\Psi \left( \cdot ,s-\cdot \right) $ and 
$\left( \partial_{1}\varphi \right) \left( \cdot ,s-\cdot \right) $ are absolutely
continuous. Assume that for all $\left( x,y\right) \in \mathbb{R}^{2}$, the
functions $S_{X}\left( x,s\right) $ and $S_{Y}\left( y,s\right) $ are
non-decreasing in $s\in V\left( s_{0}\right) $.

If there exist four integrable functions on $\mathbb{R}$, 
$A,B,C,D\in L_{1}\left( \leb \right) $ 
and a positive constant $M$ such that, for all 
$\left( s,x,y\right) \in V\left( s_{0}\right) \times \mathbb{R}^{2},$ 
\begin{eqnarray}
\left\vert \Psi \left( s-y,y\right) \right\vert &\leq &M \text{ ,}
\label{line1_quant} \\
\left\vert \partial _{1} \Psi \left( s-y,y\right) h\left( s-y,y\right)
\right\vert &\leq &A\left( y\right) \text{ ,}  \label{line2_quant} \\
\left\vert \partial _{1}\varphi \left( s-y,y\right) h\left( s-y,y\right)
\right\vert &\leq &B\left( y\right) \text{ ,}  \label{line3_quant} \\
\left\vert \partial _{2}\varphi \left( x,s-x\right) \exp \left( -\varphi
\left( x,s-x\right) \right) \right\vert &\leq &C\left( x\right) \wedge
D\left( s-x\right) \text{ ,}  \label{line4_quant}
\end{eqnarray}
then the function $I$ defined in (\ref{def_I}) is differentiable at the point $s_{0}$ and
\begin{equation}
I^{\prime}\left(s_{0}\right)   
 \geq  \left( 1-\sup_{x\in \mathbb{R}}\left\{ 
                          \frac{\partial _{2}S_{Y}\left( s_0-x,s_0\right) }
                                 {\partial _{2}S_{X}\left( x,s_0\right) +\partial _{2}S_{Y}\left( s_0-x,s_0\right) }\right\} 
                         \right) \mathbb{E}\left[ \left( \partial _{1} \Psi \right) \left( X,Y\right) \left\vert X+Y=s_0\right. \right]  . \label{mino_deriv} \end{equation}
\end{thm}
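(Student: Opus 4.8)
The plan is to differentiate the integral representation of $I$ in (\ref{I_integral_form}) under the integral sign, using the domination hypotheses (\ref{line1_quant})--(\ref{line4_quant}) to justify the interchange, and then to identify the resulting expression in terms of the conditional survival functions. Writing $I(s) = \int_{\RR} \Psi(s-y,y) f_2(y;s)\, dy$ with $f_2(y;s) = h(s-y,y)/J_2(s)$ and $J_2(s) = \int_{\RR} h(s-y',y')\, dy'$, I would first check that $J_2$ is differentiable on $V(s_0)$ with $J_2'(s) = -\int_{\RR} \partial_1 \varphi(s-y,y) h(s-y,y)\, dy$ (this is dominated via (\ref{line3_quant}), since $B \in L_1$), so that $s \mapsto f_2(y;s)$ is differentiable. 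The dominated convergence argument for $\partial_s I(s_0)$ then splits into the term where the derivative falls on $\Psi(s-y,y)$ — controlled by (\ref{line2_quant}) with bound $A(y)$ — and the term where it falls on $f_2(y;s)$, controlled by a combination of (\ref{line1_quant}) and (\ref{line3_quant})--(\ref{line4_quant}).

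Next I would rewrite $I'(s_0)$ in probabilistic form. The "derivative-on-$\Psi$" term is exactly $\EE[(\partial_1 \Psi)(X,Y) \mid X+Y = s_0]$ (using $\partial_s \Psi(s-y,y) = (\partial_1 \Psi)(s-y,y)$). The remaining term involves $\partial_s f_2(y;s)\big|_{s_0}$; integrating $\Psi(s_0-y,y)$ against it, and recognizing through the covariance identities of Section~\ref{section:rep_form_cov} — specifically the structure behind (\ref{CondDens2_SumOfDeriv}) in Theorem~\ref{Theorem_rep_cond_quantiles-ver2} — that $\partial_s f_2(y;s_0)$ can be expressed via $\partial_2 S_Y(y;s_0)$ and $\partial_2 S_X(s_0-y;s_0)$. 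The key identity I want is that the "correction" term is a weighted average of $(\partial_1 \Psi)$ against the ratio $\partial_2 S_Y(s_0-x,s_0) / \{\partial_2 S_X(x,s_0) + \partial_2 S_Y(s_0-x,s_0)\}$, which by (\ref{CondDens1_SumOfDeriv}) has denominator equal to $f_1(x;s_0)$. Bounding this ratio above by its supremum over $x$ and using $(\partial_1 \Psi) \ge 0$ together with the assumed monotonicity (which guarantees $\partial_2 S_X, \partial_2 S_Y \ge 0$, so the ratio lies in $[0,1]$ and the subtracted term is nonnegative but no larger than the full conditional expectation times the sup) yields (\ref{mino_deriv}).

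The main obstacle I anticipate is the bookkeeping in the second step: cleanly expressing $\partial_s f_2(y;s_0)$ through the survival-function derivatives and verifying that, after integration against $\Psi(s_0-y,y)$ and an integration by parts / change of variable, the "correction" term really does collapse to $\sup_x\{\cdots\}$ times $\EE[(\partial_1\Psi)(X,Y)\mid X+Y=s_0]$ rather than something messier. This requires using the decompositions $f_1 = \partial_2 S_X + \partial_2 S_Y(s_0 - \cdot)$ and the symmetry identity $F_{1,s}(x) = 1 - F_{2,s}(s-x)$ to convert between the $\mu_1$ and $\mu_2$ pictures, and care that all the manipulations stay inside the domination envelope provided by $A,B,C,D$ and $M$. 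Once the representation $I'(s_0) = \EE[(\partial_1\Psi)(X,Y)\mid X+Y=s_0] - (\text{correction})$ is established with the correction identified as an average of $(\partial_1\Psi)$ against a $[0,1]$-valued weight, the final inequality is immediate by pulling out the sup.
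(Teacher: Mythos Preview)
Your overall strategy is close to the paper's, but there is one genuine gap. After differentiating under the integral sign you correctly arrive at
\[
I'(s_0)=\EE\bigl[(\partial_1\Psi)(X,Y)\mid X+Y=s_0\bigr]+\int_{\RR}\Psi(s_0-y,y)\,\partial_s f_2(y;s_0)\,dy,
\]
and the second term is indeed $-\cov\bigl[\Psi(X,Y),(\partial_1\varphi)(X,Y)\mid X+Y=s_0\bigr]$. The problem is your claim that this correction ``is a weighted average of $(\partial_1\Psi)$'' against a $[0,1]$-valued weight. That is not an identity. Any route that converts the correction into an integral against derivatives of $\Psi$---whether via the kernel covariance identity (\ref{cor:A-B-KernelAbsCont}), as the paper does, or via your proposed integration by parts in $y$---necessarily produces the \emph{full} directional derivative $(\partial_1\Psi-\partial_2\Psi)(x,s_0-x)$ along the line $x+y=s_0$, not $\partial_1\Psi$ alone. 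Concretely, the paper obtains
\[
\cov\bigl[\Psi,(\partial_1\varphi)\mid X+Y=s_0\bigr]=\int_{\RR}(\partial_1\Psi-\partial_2\Psi)(x,s_0-x)\,(\partial_2 S_Y)(s_0-x,s_0)\,dx
\]
by applying (\ref{cor:A-B-KernelAbsCont}) to the conditional law $\mu_1$ and then invoking (\ref{DerivSsubYForm2}). The passage from $(\partial_1\Psi-\partial_2\Psi)$ to $(\partial_1\Psi)$ is an \emph{inequality}: one drops the term $-\int(\partial_2\Psi)(x,s_0-x)(\partial_2 S_Y)(s_0-x,s_0)\,dx\le 0$, and this is exactly where monotonicity of $\Psi$ in its \emph{second} argument enters (together with $\partial_2 S_Y\ge 0$). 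Your outline invokes only $(\partial_1\Psi)\ge 0$ and never $(\partial_2\Psi)\ge 0$, so as written the argument cannot close.

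Once that missing inequality is inserted, the remainder of your plan is correct and matches the paper: bound $(\partial_2 S_Y)(s_0-x,s_0)$ above by $\sup_x\{(\partial_2 S_Y)/f_1\}\cdot f_1(x;s_0)$ and use (\ref{CondDens1_SumOfDeriv}) to rewrite the denominator $f_1=\partial_2 S_X+\partial_2 S_Y(s_0-\cdot,\cdot)$, yielding (\ref{mino_deriv}).
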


\begin{proof}[Proof of Theorem \protect\ref{theorem_Efron_lower_bound}]
Under the assumptions of Theorem \ref{theorem_Efron_lower_bound}, the
function $I$ in (\ref{CondDensTwo}) and (\ref{I_integral_form}) is well-defined and we have (using notations above),
\begin{equation*}
I\left( s\right) =\int_{\mathbb{R}} \Psi \left( s-y,y\right) f_{2}\left( y; s\right) dy\text{ .}
\end{equation*}%
By differentiating with respect to $s$ (interchanging differentiation and
integral signs is allowed by the assumptions (\ref{line1_quant}), 
(\ref{line2_quant}) and (\ref{line3_quant})), we get 
\begin{equation}
I^{\prime }\left( s_{0}\right) 
=\mathbb{E}\left[ \left( \partial _{1} \Psi
\right) \left( X,Y\right) \left\vert X+Y=s_{0}\right. \right] 
-\cov\left[ \Psi \left( X,Y\right) ,\left( \partial _{1}\varphi \right) \left(
      X,Y\right) \left\vert X+Y=s_{0}\right. \right] \text{ .}
\label{derivative_I}
\end{equation}%
Notice that by Assumption (\ref{line4_quant}), kernel representations hold
for $\partial _{2}S_{X}$ and $S_{Y}$. Now, by Corollary \ref{cor:DifferenceVersion1}, Theorem 
\ref{Theorem_rep_cond_quantiles-ver2} and coordinatewise monotonicity of $\Psi$, we have 
\begin{eqnarray}
\lefteqn{\cov\left[ \Psi \left( X,Y\right) ,\left( \partial _{1}\varphi
\right) \left( X,Y\right) \left\vert X+Y=s_{0}\right. \right] } \\
&=&\label{cov_x_B_L_1} \int \!\!\int \left( \partial _{1} \Psi -\partial _{2} \Psi \right) \left( x,s_{0}-x\right) 
        K_{\mu _{s_{0}}}\left( x,\tilde{x}\right) \left( \partial_{11}^{2}\varphi -\partial _{12}^{2}\varphi \right) \left( \tilde{x},s_{0}-\tilde{x}\right) dxd\tilde{x} \\
&=& \label{cov_x_B_L_2} \int \left( \partial _{1} \Psi -\partial _{2} \Psi \right) \left( x,s_{0}-x\right) \left( \partial _{2}S_{Y}\right) \left(
             s_{0}-x,s_{0}\right) dx \\
&\leq & \label{cov_x_B_L_3} \int \left( \partial _{1} \Psi  \right) \left( x,s_{0}-x\right) \left(
               \partial _{2}S_{Y}\right) \left( s_{0}-x,s_{0}\right) dx \\
&\leq & \label{cov_x_B_L_4} \sup_{x\in \mathbb{R}}\left\{ \frac{\left( \partial _{2}S_{Y}\right)
              \left( s_{0}-x,s_{0}\right) }{f_{1}\left( x,s_{0}\right) }\right\} \int
              \left( \partial _{1} \Psi  \right) \left( u,s_{0}-u\right)
             f_{1}\left( u,s_{0}\right) du  \notag \\
&=& \label{cov_x_B_L_5} \sup_{x\in \mathbb{R}}\left\{ \frac{\left( \partial _{2}S_{Y}\right)
        \left( s_{0}-x,s_{0}\right) }{\left( \partial _{2}S_{X}\right) \left(
         x,s_{0}\right) +\left( \partial _{2}S_{Y}\right) \left( s_{0}-x,s_{0}\right)  }\right\} \mathbb{E}\left[ \left( \partial _{1} \Psi \right) \left(
X,Y\right) \left\vert X+Y=s_{0}\right. \right] \text{ .}  
\end{eqnarray}
Indeed, equality (\ref{cov_x_B_L_1}) comes from identity (\ref{cor:A-B-KernelAbsCont}), then we used identity (\ref{DerivSsubYForm2}) to obtain (\ref{cov_x_B_L_2}). Inequality (\ref{cov_x_B_L_3}) is derived using coordinatewise monotonicity of $\Psi$ together with monotonicity of $S_{Y} \left(y,s_{0}\right)$. Finally, equality (\ref{cov_x_B_L_5}) follows from identity (\ref{CondDens1_SumOfDeriv}). 
\end{proof}
\medskip
Note that by symmetry between $X$ and $Y$, if the right integrability conditions are satisfied, then we could also get
\begin{eqnarray*}
I^{\prime}\left(s_{0}\right)   
& \geq &  \left( 1-\sup_{x\in \mathbb{R}}\left\{ \frac{\partial _{2}S_{X}\left( x,s_0\right) }{\partial _{2}S_{X}\left( x,s_0\right) 
            +\partial _{2}S_{Y}\left( s_0-x,s_0\right) }\right\} \right) 
            \mathbb{E}\left[ \left( \partial _{2} \Psi \right) \left( X,Y\right) \left\vert X+Y=s_0\right. \right]  \notag
\end{eqnarray*}
or, mixing the latter lower bound with the one of Theorem \ref{theorem_Efron_lower_bound},
\begin{eqnarray*}
I^{\prime}\left(s_{0}\right)   
& \geq & \left( 1-\sup_{x\in \mathbb{R}}\left\{ 
                          \frac{\partial _{2}S_{Y}\left( s_0-x,s_0\right) }
                                 {\partial _{2}S_{X}\left( x,s_0\right) +\partial _{2}S_{Y}\left( s_0-x,s\right) }\right\} 
                         \right) \mathbb{E}\left[ \left( \partial _{1} \Psi \right) \left( X,Y\right) \left\vert X+Y=s_0\right. \right]   \\
&& \ \ \vee \left( 1-\sup_{x\in \mathbb{R}}\left\{ \frac{\partial _{2}S_{X}\left( x,s_0\right) }{\partial _{2}S_{X}\left( x,s_0\right) 
            +\partial _{2}S_{Y}\left( s_0-x,s_0\right) }\right\} \right) 
            \mathbb{E}\left[ \left( \partial _{2} \Psi \right) \left( X,Y\right) \left\vert X+Y=s_0\right. \right] .  \notag
\end{eqnarray*}
Let us now return to the statistical application discussed at the end of the introduction and further investigated in 
Subsection~\ref{subsec:MonUnderMsmtError} 
above. 
Using the notation of Subsection~\ref{subsec:MonUnderMsmtError}, 
we are now able to give a lower bound on the derivative of the regression function $\EE[T|X=x]$, 
relative to the derivative of $\Psi(u)=\EE[T|U=u]$. 
Indeed, by setting $h=f_{U,X}=\exp(-\varphi)$, 
we find from formula (\ref{forumla_meas_error_2}) and Theorem \ref{theorem_Efron_lower_bound} 
that, under the required integrability conditions we have
\begin{equation*}
\frac{d}{dx}(\EE[T|X=x]) \ge \left(1- \sup_{u\in \mathbb{R}}\left\{ 
                          \frac{\partial _{2}S_{Z}\left( x-u,x\right) }
                                 {\partial _{2}S_{U}\left( u,x\right) +\partial _{2}S_{Z}\left( x-u,x\right) }\right\} 
                         \right) \mathbb{E}\left[  \Psi'  \left( U\right) \left\vert U+Z=x\right. \right] .
\end{equation*}

\section{Proofs and Examples for Section~\protect\ref{section:rep_form_cov}} 
\label{section:KernelRepresentProofs}

\subsection{Proofs of the Covariance Identities}
\label{subsec:ProofsCovIdent}

We begin by reviewing several identities in \cite{MR1762415}, section 7.4.
Let $(X,Y)$ have a joint distribution function $H$ on $\mathbb{R}^2$ with marginals 
$F$ and $G$ for $X$ and $Y$ respectively.  
Let $F^{-1}$ denote the left-continuous inverse of $F$. Thus if $\xi \sim \ $Uniform$(0,1)
$, $X\equiv F^{-1}(\xi )$ has distribution function $F$. Then we can write 
\begin{equation*}
X=\int_{(0,1)}F^{-1}(t)d1_{[\xi \leq t]}=F^{-1}(\xi ),\ \ \mbox{and}\ \
X=\int_{\mathbb{R}}xd1_{[X\leq x]}.
\end{equation*}%
Similarly if the mean $\mu $ of $X$ exists, then 
\begin{equation*}
\mu =\int_{(0,1)}F^{-1}(t)dt , \ \ \mbox{and} \ \ \nu = \int_{\mathbb{R}}xdF(x).
\end{equation*}%
By taking the differences in these identities we find that 
\begin{eqnarray*}
&&X-\mu =\int_{(0,1)}F^{-1}(t)d(1_{[\xi \leq t]}-t)=-\int_{(0,1)}(1_{[\xi \leq t]}-t)dF^{-1}(t),\ \ \ \mbox{and} \\
&&X-\mu =\int_{\mathbb{R}}xd(1_{[X\leq x]}-F(x)) = -\int_{\mathbb{R}}(1_{[X\leq x]}-F(x))dx,
\end{eqnarray*}%
where the second expressions follow from integration by parts or from
Fubini's theorem. Note that the existence of $\mu $ is used in both of
these proofs. 
Similarly, if $a$ is nondecreasing and left-continuous, with $E|a(X)|<\infty $, then 
\begin{equation*}
a(X)=\int_{\mathbb{R}} a(x)d1_{[X\leq x]},\ \ \ \ Ea(X)=\int_{\mathbb{R}}a(x)dF(x),
\end{equation*}%
and hence 
\begin{equation*}
a(X)-Ea(X)=\int_{\mathbb{R}}a(x)d(1_{[X\leq x]}-F(x)) = -\int_{\mathbb{R}}(1_{[X\leq x]}-F(x))da(x) .
\end{equation*}
A similar development for $b(Y)$ yields
\begin{equation*}
b(Y)-Eb(Y)=\int_{\mathbb{R}}b(y)d(1_{[Y\leq y]}- G(y)) = -\int_{\mathbb{R}}(1_{[Y\leq y]}-G(y))db(y) .
\end{equation*}
where the second expressions follows from integration by parts or from Fubini's theorem together 
with $Var(b(Y)) < \infty$.

Using the identities above together with Fubini's theorem and the assumption 
$Var(a(X))<\infty $, $Var(b(Y)) < \infty$, we obtain the covariance identity (\ref{Hoeffding-Shorack}).
This is just as in \cite{MR1762415}, page 117, formula (14).
\smallskip 

Corollary~\ref{cor:Hoeffding} follows immediately from Proposition~\ref{Hoeffding-Shorack} by taking 
$a$ and $b$ to be the identity functions.   

\begin{proof}[Proof of Corollary~\protect\ref{cor:Shorack-Conseq}]
Part (a) follows immediately upon noting that when $Y=X$, $H(x,y) = F(x\wedge y)$. \\
Part (b):  First note that $Var(a(X))<\infty $ since 
\begin{eqnarray*}
Var(a(X)) &\leq &Ea^{2}(X)=E\{(a_{1}(X)-a_{2}(X))^{2}\} 
\le 2\{Ea_{1}^{2}(X)+Ea_{2}^{2}(X)\}<\infty .
\end{eqnarray*}
Similarly, $Var(b(X)) < \infty$, and  
 $|Cov(a_{1}(X),a_{2}(X))|<\infty $ by the Cauchy-Schwarz inequality.  Then $Cov(a_1 (X), a_2 (X))$ is given
 by polarization: 
\begin{equation*}
Cov(a_{1}(X),a_{2}(X))=\frac{1}{4}\left\{
Var(a_{1}(X)+a_{2}(X))-Var(a_{1}(X)-a_{2}(X))\right\} .
\end{equation*}%
Thus we have, by using the variance identity resulting from (\ref{cor:A-B-Kernel}) with $a=b$, the covariance identity (\ref{cor:A-B-Kernel}), 
and the symmetry of $F(x\wedge y)-F(x)F(y)$ in $x$ and $y$, 
\begin{eqnarray*}
\lefteqn{Var(a(X))} \\
&=&Var(a_{1}(X)-a_{2}(X)) \\
&=&Var(a_{1}(X))-2Cov(a_{1}(X),a_{2}(X))+Var(a_{2}(X)) \\
&=&\int \!\!\!\int \{F(x\wedge y)-F(x)F(y)\} da_{1}(x)da_{1}(y)
        - \int \!\!\!\int \{F(x\wedge y)-F(x)F(y)\}d a_{1}(x)d a_{2}(y) \\
&&\ -\ \int \!\!\!\int \{F(x\wedge y)-F(x)F(y)\}da_{1}(y)da_{2}(x)+\int
        \!\!\!\int \{F(x\wedge y)-F(x)F(y)\}da_{2}(x)da_{2}(y) \\
&=&\int \!\!\!\int_{\mathbb{R}^{2}}\{F(x\wedge y)-F(x)F(y)\}d(a_{1}-a_{2})(x)d(a_{1}-a_{2})(y) \\
&=&\int \!\!\!\int_{\mathbb{R}^{2}}\{F(x\wedge y)-F(x)F(y)\}d a(x)da(y).
\end{eqnarray*}
Then (\ref{cor:A-B-Kernel}) holds for $a=a_1-a_2$ and $b=b_1-b_2$ by polarization:
\begin{equation*}
Cov(a(X), b(X))
=\frac{1}{4}\left\{ Var(a(X)+ b(X))-Var(a(X)- b(X))\right\} .
\end{equation*}
\end{proof}

\begin{rem}
If $a$ is non-decreasing and right-continuous, then the identity in Corollary~\ref{cor:Shorack-Conseq}
can fail: for example, if 
$F(x)=(1-p)1_{[0,\infty)}(x)+p1_{[1,\infty )}(x)$ so that 
$X\sim \ $Bernoulli$(p)$, and 
$a(x)=1_{[1,\infty )}(x)$, then $a(X)\sim \ $Bernoulli$(p)$ so 
$Var(a(X))=p(1-p)$ on the left side, but 
\begin{equation*}
\int \!\!\!\int \{F(x\wedge y)-F(x)F(y)\}da(x)da(y)=(F(1)-F(1)F(1))\cdot 1\cdot 1=0.
\end{equation*}
(On the other hand, if $a(x)=1_{(0,\infty )}(x)$, then $a(X)=1$ with
probability $p$ so that it is again a Bernoulli$(p)$ random variable and the
left side is again $p(1-p)$, but the right side equals 
$F(0)-F(0)^{2}=(1-p)-(1-p)^{2}=(1-p)\cdot p$.) \smallskip 
\end{rem}

\begin{rem}
Note that both sides in the variance 
version of (\ref{cor:A-B-Kernel})  are infinite  if $Var(a(X))=\infty $. \smallskip
\end{rem}

\begin{proof}[Proof of Corollary~\protect\ref{cor:IndicatorAFcn}]
Let $a(x)\equiv F(z)-1_{[x\leq z]}$. First notice that
\begin{equation*}
F(z)\int_{\mathbb{R}}b(y)dF(y)-\int_{(-\infty, z]}b\left( y\right) dF\left( y\right) 
=\int_{\mathbb{R}}b\left( y\right) a\left( y\right) dF\left( y\right) \text{ .}
\end{equation*}
Then $a$ increases from $F(z)-1$ to $F(z)$ with the only change being a jump
upward of $1$ at $x=z$. Note that the first equality in (\ref{Indicator-MRLCovIdent-1}) 
holds since $E_{F}a(X)=0$. Then the
second equality in (\ref{Indicator-MRLCovIdent-1}) 
follows from (\ref{cor:A-B-Kernel}).
The  equalities in (\ref{Indicator-MRLCovIdent-2}) follow by noting that 
$$
a(x) = - \{ (1-F(z)) - (1- 1_{[x\le z]} ) \} = -  (1-F(z)) +  1_{[x> z]} .
$$
If $b \in L_{1}(F)$ is absolutely continuous,
then
\begin{equation*}
\int_{\mathbb{R}}K_{\mu }\left( z,y\right) db(y)
=\int_{\mathbb{R}}K_{\mu}\left( z,y\right) b^{\prime }(y)dy\text{ .}
\end{equation*}
Moreover, in this case, $b$ has bounded variation, so $b=b_{1}-b_{2}$
with $b_{j}$ non-decreasing, $j=1,2$,  Since $b$ is continuous and in $L_{1}\left( F\right) $, 
we may assume without loss of generality that $b_{j}$, $j=1,2$, 
are also continuous and in $L_{1}\left( F\right) $ 
(see for instance \cite{MR1762415}, Exercise 4.1, p.75). 
Hence, (\ref{Indicator-MRLCovIdent-1}) is 
valid in this case and so is (\ref{Indicator-MRLCovIdent-1-AbsC}). 
The proof of (\ref{Indicator-MRLCovIdent-2-AbsC}) 
is similar using (\ref{Indicator-MRLCovIdent-2}).  
\end{proof}


\medskip

\begin{proof}[Proof of Corollary~\protect\ref{cor:DifferenceVersion1}]  
Since $a \in L_p (F)$ and $b\in L_{q}(F)$ for some $p,q \in [1,\infty]$ satisfying
 $p^{-1}+q^{-1}=1$, then
by H\"{o}lder's inequality 
\begin{equation*}
|Cov(a(X),b(X))|\leq \Vert a(X)\Vert _{p}\Vert b(X)\Vert _{q}<\infty 
\end{equation*}
where 
\begin{equation*}
\Vert a(X)\Vert _{p}\equiv \left\{ E|a(X)|^{p}\right\} ^{1/p}
=\left\{ \int |a(x)|^{p}dF(x)\right\} ^{1/p}
\end{equation*}
and similarly for $\Vert b(X)\Vert _{q}$. 
The fact that $a$ and $b$ are absolutely
continuous with $a\in L_{p}(F)$, $b\in L_{q}(F)$ implies that $a=a_{1}-a_{2}$
and $b=b_{1}-b_{2}$ where $a_{j}$, $b_{j}$ are non-decreasing, left -
continuous and 
$\Vert a_{j}(X)\Vert _{p}<\infty $, 
$\Vert b_{j}(X)\Vert_{q}<\infty $ for $j=1,2$. 
Hence, Corollary~\ref{cor:DifferenceVersion1}  
follows from Corollary~\ref{cor:IndicatorAFcn}.  
\end{proof}



\begin{proof}[Proof of Corollary~\protect\ref{cor:KernelRecoveryOfDensity}]
(a) This follows from Corollary~\ref{cor:IndicatorAFcn}. 
Indeed, we take $b=\varphi ^{\prime }$
in Corollary~\ref{cor:IndicatorAFcn}. Then from Corollary~\ref{cor:IndicatorAFcn}, 
we get 
\begin{eqnarray*}
\int_{\mathbb{R}}K_{\mu }(x,y)d\varphi ^{\prime }(y) 
&=&-\int_{\mathbb{R} } \left( 1_{(-\infty ,x]}(y)-F(x)\right) \varphi ^{\prime }(y)f(y)dy \\
&=&-\int_{(-\infty, x]}\varphi ^{\prime }(y)f(y)dy+F(x)\int_{-\infty }^{\infty }\varphi ^{\prime }(y)f(y)dy \\
&=&f(x)+F(x)\cdot 0=f(x)
\end{eqnarray*}
since $\int_{(-\infty,x]}\varphi ^{\prime }(y)f(y)dy=-f(x)$ and $\int_{(x,\infty) }\varphi ^{\prime }(y)f(y)dy=f(x)$. 

(b):  This follows from (a) and the hypothesized absolute continuity.

(c) and (d): It remains only to show that the hypotheses of Corollary~\ref{cor:KernelRecoveryOfDensity}(a)
always hold in the log-concave case. But in this case $\varphi ^{\prime }$
is monotone non-decreasing, so by taking the right-continuous version of 
$\varphi ^{\prime }$ and letting 
$x_{0}\equiv \inf \{y\in \mathbb{R}:\varphi^{\prime }(y)\geq 0\}$, 
we have $\varphi ^{\prime }(x)\geq 0$ for $x\geq x_{0}$ and $\varphi ^{\prime }(x)<0$ for $x<x_{0}$. It follows that 
\begin{eqnarray*}
\int_{\mathbb{R}}|\varphi ^{\prime }(y)|e^{-\varphi (y)}dy
&=&\int_{x_{0}}^{\infty }\varphi ^{\prime }(y)e^{-\varphi
(y)}dy+\int_{-\infty }^{x_{0}}(-\varphi ^{\prime }(y))e^{-\varphi (y)}dy \\
&=&2e^{-\varphi (x_{0})}<\infty .
\end{eqnarray*}
Thus $\varphi ^{\prime }\in L_{1}(F)$ and the hypotheses of Corollary~\ref{cor:KernelRecoveryOfDensity}(a)
hold.
\end{proof}

\begin{rem}
Another way to check finiteness of covariances is to use the following 
consequence of H\"older's inequality for the kernel $K_\mu$ in (\ref{def_kernel}):
note that if $1/p+1/q=1$ with $p\geq 1$ we have 
\begin{eqnarray*}
\lefteqn{F(x\wedge y)-F(x)F(y)   =Cov(1_{[X\leq x]},1_{[X\leq y]})} \\
&=&\int \left\{ (1_{[z\leq x]}-F(x))(1_{[z\leq y]}-F(y))\right\} dF(z) \\
&\leq &\left\{ \int |1_{[z\leq x]}-F(x)|^{p}dF(z)\right\} ^{1/p}\cdot
\left\{ \int |1_{[z\leq y]}-F(y)|^{q}dF(z)\right\} ^{1/q} \\
&=&\left\{ |1-F(x)|^{p}F(x)+F(x)^{p}(1-F(x))\right\} ^{1/p}\cdot \left\{
|1-F(y)|^{q}F(y)+F(y)^{q}(1-F(y))\right\} ^{1/q} \\
&\leq &\left\{ F(x)(1-F(x))\right\} ^{1/p}\cdot \left\{ F(y)(1-F(y))\right\}
^{1/q}.
\end{eqnarray*}%
Thus finiteness of the integrals $\int_{\mathbb{R}}\left\{
F(x)(1-F(x))\right\} ^{1/p}d a_{j}(x)$ and \newline
$\int_{\mathbb{R}}\left\{ F(x)(1-F(x))\right\} ^{1/q}db_{j}(x)$ for $j=1,2$
and any conjugate pair $(p,q)$ implies that $|Cov(a(X),b(X))|<\infty $
follows from Corollary~\ref{cor:Shorack-Conseq}.
\end{rem}

\subsection{Examples and Counterexamples}

We give five examples in connection with the formulas (\ref{LogConcaveTypeIdentityGeneralForm}) 
and (\ref{LogConcaveTypeIdentitySecondDerivativeForm}) 
in Corollary~\ref{cor:KernelRecoveryOfDensity}.    
In the first three examples $f$ is log-concave, (\ref{LogConcaveTypeIdentityGeneralForm}) follows 
from (c) and is known from \cite{Menz-Otto:2013}.  
The third and fourth examples give cases in which log-concavity fails, but at least
one of (\ref{LogConcaveTypeIdentityGeneralForm}) and (\ref{LogConcaveTypeIdentitySecondDerivativeForm}) holds.
\smallskip

\noindent 
\textbf{Example 1.} 
(Gamma densities). Let 
\begin{equation*}
f(x) \equiv f_{\theta }(x)=\frac{x^{\theta -1}}{\Gamma (\theta )}\exp (-x)1_{(0,\infty)}(x)
\end{equation*}
for $\theta >0$. It is easily seen that the densities $f_{\theta }$ are
log-concave for $\theta \ge 1$ and absolutely continuous for $\theta>1$. 
The derivative $\varphi^{\prime} $ exists everywhere if $\theta >1$.
 Thus (\ref{LogConcaveTypeIdentityGeneralForm}) holds for $\theta>1$.
Furthermore, note that 
\begin{equation*}
\varphi (x)=x-(\theta -1)\log x+\log \Gamma (\theta )
\end{equation*}
and hence $\varphi^{\prime} $ is absolutely continuous for $\theta >1$ with 
\begin{equation*}
\varphi ^{\prime }(x)=1-\frac{\theta -1}{x},
\qquad \varphi ^{\prime \prime}(x)=\frac{\theta -1}{x^{2}}
\end{equation*}
while 
$$
E| \varphi^{\prime} (X) | \le (\theta-1) \int_0^{\infty} x^{-1} \frac{x^{\theta -1}}{\Gamma (\theta )}\exp (-x) dx = 1 < \infty
$$
if $\theta > 1$.    
Thus by (b) of Corollary~\ref{cor:KernelRecoveryOfDensity},
 (\ref{LogConcaveTypeIdentitySecondDerivativeForm})   holds for $\theta>1$.    
 When $\theta =1$, $f(x) = \exp(-x) 1_{(0,\infty)} (x)$ 
 is log-concave, but $\varphi $ is not absolutely continuous, and it can easily be seen that 
 (\ref{LogConcaveTypeIdentityGeneralForm}) fails.  When $\theta \in (0,1)$, $f_{\theta}$ is not log-concave and $\varphi$ is 
 not absolutely continuous.  In this case 
 the hypotheses (and conclusions) of Corollary~\ref{cor:KernelRecoveryOfDensity} fail.
 \medskip

\noindent 
\textbf{Example 2.} 
(Logistic density).   Now let 
$f$ be the logistic density,  $f(x) = e^{-x}/(1+e^{-x})^2$.  In this case $f$ is absolutely continuous and strictly log-concave
since $\varphi (x) = x + 2 \log (1+e^{-x})$ is convex with $\varphi^{\prime \prime} (x) = 2f(x) >0$ for all $x \in \RR$ and 
$\varphi^{\prime}$ is bounded.  
Thus  (\ref{LogConcaveTypeIdentitySecondDerivativeForm}) holds.  
This can also be verified by a direct calculation:
\begin{eqnarray*}
\int_{\RR} K_{\mu} (x,y) \varphi^{\prime\prime} (y) dy 
& = & \int_{(-\infty,x]} F(y) (1-F(x)) 2 f(y) dy + \int_{(x,\infty)} F(x) (1-F(y) ) 2 f(y) dy \\
& = &  2(1-F(x)) \int_{(-\infty,x]} F(y) d F(y)  + 2F(x) \int_{(x,\infty)} (1-F(y) )dF(y) \\
& = & F(x)(1-F(x)) \{ F(x) + (1-F(x)) \} = F(x) (1-F(x)) = f(x) .
\end{eqnarray*}

\noindent 
\textbf{Example 3.} 
(Laplace density). 
For the Laplace density $f(x)=(1/2)\exp (-|x|)$, we see that $f$ is absolutely continuous,
$\varphi (x)=|x|+\log (2)$ is convex, and $\varphi $ has derivative $\varphi' (x) = \mbox{sign}(x)$ for $x \neq 0$.  
In this case $f$ is log-concave and (\ref{LogConcaveTypeIdentityGeneralForm}) holds by (c) 
of Corollary~\ref{cor:KernelRecoveryOfDensity}.
\medskip

\par\noindent
\textbf{Example 4.} 
(Cauchy). Suppose that $f$ is the Cauchy
density given by 
\begin{equation*}
f(x)=\frac{1}{\pi }\frac{1}{1+x^{2}}.
\end{equation*}
Then 
\begin{equation*}
F(x)=\frac{1}{2}+\frac{1}{\pi }\arctan (x),
\end{equation*}
$f$ is absolutely continuous,
$\varphi (x)=\log \pi +\log (1+x^{2})$, and $\varphi^{\prime}$ is absolutely continuous with 
\begin{equation*}
\varphi ^{\prime }(x)=\frac{2x}{1+x^{2}},
\ \ \ \varphi ^{\prime \prime }(x)=\frac{2(1-x^{2})}{(1+x^{2})^{2}}.
\end{equation*}
It follows from Corollary~\ref{cor:KernelRecoveryOfDensity} (b) that 
the identity (\ref{LogConcaveTypeIdentitySecondDerivativeForm}) holds.
This can also be seen by direct calculation as follows: 
\begin{eqnarray*}
\int_{\mathbb{R}}K_{\mu }(x,y)\varphi ^{\prime \prime }(y)dy
&=&(1-F(x))\int_{(-\infty ,x]}F(y)\frac{2(1-y^{2})}{(1+y^{2})^{2}}dy 
         \ +\ F(x)\int_{(x,\infty) }(1-F(y))\frac{2(1-y^{2})}{1+y^{2})^{2}}dy \\
&=&\frac{2(1-F(x))(1+\pi x+2x\arctan (x))+2F(x)(1-\pi x+2x\arctan (x)}{2\pi (1+x^{2})} \\
&=&\frac{(1-F(x))\left( 1+\pi x+2x\arctan (x)\right) +F(x)\left( 1-\pi
        x+2x\arctan (x)\right) }{\pi (1+x^{2})} \\
&=&\frac{1+2\pi x\{F(x)(1-F(x))-F(x)(1-F(x))\}}{\pi (1+x^{2})} \\
&=&\frac{1}{\pi (1+x^{2})}=f(x).
\end{eqnarray*}

\noindent 
\textbf{Example 5.} 
(Bridge distribution; \cite{MR2024756}).
Suppose that $X \sim f \equiv f_{\theta} $ where, for $\theta \in (0,1)$, 
\begin{equation*}
f_{\theta} (x) = \frac{\sin( \pi \theta)}{2 \pi ( \cosh (\theta x) + \cos (
\pi \theta ))}.
\end{equation*}
These densities are log-concave for $\theta \in (0,1/2]$, but log-concavity
fails for $\theta \in (1/2,1)$.   They are all absolutely continuous.   
Here with $\varphi_{\theta} (x) \equiv -\log p_{\theta} (x)$ we have 
\begin{eqnarray*}
\varphi_{\theta}^{\prime}(x) = \frac{\theta \sinh (\theta x)}{ \cos (\pi
\theta) + \cosh (\theta x)}
\end{eqnarray*}
and 
\begin{eqnarray*}
\varphi_{\theta}^{\prime\prime}(x) = \frac{\theta^2 (1+ \cos(\pi \theta )
\cosh (\theta x)}{( \cos (\pi \theta) + \cosh(\theta x))^2} .
\end{eqnarray*}
Note that $\varphi_{\theta}^{\prime} $ is bounded, and hence $\varphi_{\theta}^{\prime} \in L_1 (F_{\theta})$.
It follows from  Corollary~\ref{cor:KernelRecoveryOfDensity} (b) 
that (\ref{LogConcaveTypeIdentitySecondDerivativeForm}) holds.

\section*{Acknowledgements:}  We owe thanks to two referees for catching a number of 
errors and for helpful suggestions concerning the organization and exposition.

\bibliographystyle{imsart-nameyear}
\bibliography{chern}

\end{document}